\title{Geometry of curves in generalized flag varieties}
\author
{Boris Doubrov \and Igor Zelenko}
\address{Belarusian State University, Nezavisimosti Ave.~4, Minsk 220030, Belarus;
 E-mail: doubrov@islc.org}
\address{Department of Mathematics, Texas A\&M University,
   College Station, TX 77843-3368, USA; E-mail: zelenko@math.tamu.edu}
\subjclass[2010]{53B25, 53B15, 53A55, 17B70.}
\keywords{Differential invariants, moving frames, generalized flag varieties, graded Lie algebras, projective parametrization}
\newcommand{\R}{\mathbb R}
\newcommand{\om}{\omega}
\newcommand{\g}{\mathfrak g}
\newcommand{\gl}{\mathfrak{gl}}
\newcommand{\pg}{\mathfrak p}
\newcommand{\sll}{\mathfrak{sl}}
\newcommand{\st}{\mathfrak{st}}
\newcommand{\sg}{\mathfrak s}
\newcommand{\hg}{\mathfrak h}
\newcommand{\Z}{\mathbb Z}
\newcommand{\Oct}{{\mathbb O}_s}
\newcommand{\ad}{\operatorname{ad}}
\newcommand{\gr}{\operatorname{gr}}
\newcommand{\Ad}{\operatorname{Ad}}
\newcommand{\Gr}{\operatorname{Gr}}
\newcommand{\tr}{\operatorname{tr}}
\newcommand{\diag}{\operatorname{diag}}
\newcommand{\Hom}{\operatorname{Hom}}
\newcommand{\RP}{\R\mathrm{P}}
\newcommand{\cG}{\mathcal G}
\newcommand{\xg}{\mathfrak{x}}
\newcommand{\eps}{\varepsilon}
\newcommand{\E}{\mathcal E}
\newcommand{\sol}{\operatorname{sol}}
\newenvironment{spmatrix}{\left(\begin{smallmatrix}}{\end{smallmatrix}\right)}
\newtheorem{thm}{Theorem}
\newtheorem{lem}{Lemma}
\theoremstyle{definition}
\newtheorem{dfn}{Definition}
\theoremstyle{remark}
\newtheorem{rem}{Remark}
\numberwithin{equation}{section}
\dedicatory{Dedicated to Mike Eastwood on the occasion of his 60th birthday.}
\begin{document}
\begin{abstract}
The current paper is devoted to the study of integral curves of constant type in generalized flag varieties. We construct a canonical moving frame bundle for such curves and give a criterion when it turns out to be a Cartan connection. Generalizations to parametrized curves, to higher-dimensional submanifolds and to parabolic geometries are discussed.
\end{abstract}

\maketitle

\section{Introduction}
 This paper is devoted to the local geometry of curves in generalized flag varieties. We provide the
uniform approach to this problem based on algebraic properties of so-called distinguished curves and their
symmetry algebras. In particular, we develop the methods that allow to deal with arbitrary curves without any
additional non-degeneracy assumptions. Although the paper deals with smooth curves in real homogeneous spaces, the whole theory applies equally to the case of holomorphic curves in complex generalized flag varieties.

Unlike the classical moving frame methods and techniques~\cite{cartan, griffits, green, jensen, olver1, olver2}, we avoid branchings as much as possible treating all curves at once. In fact, the only branching used in our paper is done at a very first step based on the type of the tangent line to a curve, or, in other words on its 1-st jet.

For each curve type there is a class of most symmetric curves, also known as \emph{distinguished curves} in a generalized flag variety~\cite{slovak, capslovak}. There are simple algorithms to compute the symmetry algebra of these curves \cite{dk}. We try to approximate any curve by the distinguished curves of the same type and construct the smallest possible moving frame bundle for all curves of a given type. It has the same dimension as the symmetry algebra of the distinguished curve, and its normalization conditions as well as the number of fundamental invariants (or curvatures) is defined by the simple algebraic properties of this symmetry algebra.

We are mainly interested in curves that are integral curves of the vector distribution defined naturally on any generalized flag variety. All distinguished curves of this kind admit the so-called~\emph projective parameter~\cite{dou:05}. This means that there is a natural 3-dimensional family of parameters on each such curve related by projective transformations. One of the questions we answer in this paper is when such projective parameter exists for any (non-homogeneous) curve of a given type. For example, it is well-known~\cite{cartan-proj} that an arbitrary non-degenerate curve in projective space can be naturally equipped with a family projective parameters, even if this curve is no longer homogeneous and, unlike rational normal curves (the distinguished curves in case of the projective space), is not invariant under the global action of $SL(2,\R)$.

It appears that the existence of such projective parameter in case of arbitrary generalized flag variety and any curve depends on some specific algebraic properties of the symmetry algebra of the distinguished curve
approximating a given curve. In particular, such parameter always exists when this symmetry algebra is reductive.

However, this is not always the case, and we give one of the smallest examples (curves in the flag variety $F_{2,4}(\R^5)$) when there are no invariant normalization conditions for the moving frame, and there
is no uniform way of constructing the projective parameter on all integral curves of generic type in $F_{2,4}(\R^5)$.

Let us introduce the notation we shall use throughout the paper. Let $M=G/P$ be an arbitrary generalized flag variety, where $\g=\sum_{i\in\Z}\g_i$ is a graded semisimple Lie algebra of the Lie group $G$ and $\pg=\sum_{i\ge 0}{\g}$ is a parabolic subalgebra of $\g$. It is well-known that there is a so-called \emph{grading element} $e\in\g_0$ such that $\g_i=\{u\in\g\mid [e,u]=iu\}$ for all $i\in\Z$. Denote by $G_0$ the stabilizer of $e$ with respect of the adjoint action of $G$ on $\g$. Then $G_0$ is a reductive, but not necessarily connected subgroup of $G$ with the Lie algebra $\g_0$.

Each generalized flag variety is naturally equipped with a bracket-generating vector distribution $D$, which can be described as follows. Let $o=eP$ be the ``origin'' in $M$. We can naturally identify $T_oM$ with a quotient space $\g/\pg$. Then the vector distribution $D\subset TM$ is defined as a $G$-invariant distribution equal to $\g_{-1}\mod \pg$ at $o$.

We are interested in local differential invariants of unparametrized integral curves of $D$. The first natural invariant of such curve $\gamma$ is a type of its tangent line. Namely, let $PD$ be the projectivization of the distribution $D$. The action of $G$ is naturally lifted to $PD$ and is, in general, no longer transitive. The orbits of this action are in one-to-one correspondence with the orbits of the action of $P$ (or, in fact, of $G_0$) on the projective space $P(\g_{-1})$. Each integral curve $\gamma$ is naturally lifted to the curve in $PD$. We say that $\gamma$ is \emph{of constant type}, if its lift to $PD$ lies in a single orbit of the action of $G$ on $PD$.

As proved by Vinberg~\cite{vinberg}, the action of the group $G_0$ on $P(\g_{-1})$ has a finite number of orbits. So, the above definition of a curve of constant type automatically holds for an open subset of $\gamma$ and essentially means that we exclude singular points, where the tangent lines to $\gamma$ degenerate to the boundary of the orbit of tangent lines at generic points.

The goal of this article is to provide a universal method for constructing \emph{moving frames} for integral curves of constant type on $M=P/G$. This study is motivated by a number of examples of curves in flag varieties,
which appear 
in the geometry of non-linear differential equations of finite type via the so-called linearization procedure ~\cite{dou:08, quasi}, non-holonomic vector distributions~\cite{douzel2,douzel3} and more general structures coming from the Control Theory \cite{agrzel2, agrzel3, zel-che, quasi} via the so-called symplectification/linearization procedure.

Note that our setup covers many classical works on the projective geometry of curves and ruled surfaces. Particular cases of curves in generalized flag varieties considered earlier in the literature also include the geometry of curves in Lagrangian Grassmannains~\cite{agrzel1,agrzel2,agrzel3,zel05,zel-che},  generic curves in $|1|$-graded flag varieties~\cite{beffa1,beffa2}, curves in Grassmann varieties $\Gr(r,kr)$ that correspond to systems of $r$ linear ODEs of order $k$~\cite{se-ashi}.

The paper is organized as follows. In Section~\ref{sec:2} we formulate the main result of the paper and prove it in Section~\ref{sec:3}. Section~\ref{sec:4} is devoted to various examples including the detailed description of all curve types for $G_2$ flag varieties.

In Section~\ref{sec:app} we apply the developed techniques for constructing conformal, almost-symplectic and $G_2$ structures on the solution space of a single ODE of order 3 and higher. These results can be considered a generalization of the classical Wunschmann result on the existence of a natural conformal structure on the solution space of a 3rd order ODE provided that a certain invariant of the equation (known today as Wunschmann invariant) vanishes identically. This results was generalized in~\cite{dou:08, dun-tod, godl, dun-godl, dou:dun} to the case of higher order ODEs which have all higher order Wunschmann invariants identically equal to $0$. In Theorem~\ref{thm:2} we find necessary and sufficient conditions for such geometric structures to exist.

Finally, in Section~\ref{sec:5} we discuss the generalizations of our main result to the cases of parametrized curves, curves in curved parabolic geometries and the case of higher dimensional integral submanifolds.

\subsection*{Acknowledgments}
The first author would like to thank the International Center for Theoretical Physics in Trieste and the Mathematical Sciences Institute at the Australian National University for their hospitality and financial support during the work on this paper. We also would like to thank Mike Eastwood, Tohru Morimoto, Maciej Dunajski,
Dennis The for many stimulating discussions on the topic.

\section{Formulation of the main result}\label{sec:2}
Let us fix an element $x\in \g$ of degree $-1$. We say that an integral curve $\gamma$ is \emph{of type $x$}, if the lift of $\gamma$ to $PD$ lies in the orbit of the line $\R x$ under the action of $G$ on $PD$. Here we treat the one-dimensional space $\R x$ as a point in $PD_o$. In other words, $\gamma$ is of type $x$ if for each point $p\in \gamma$ there exists an element $g\in G$ such that $g.o=p$ and $g_*(x)\subset T_p\gamma$.

Define a curve $\gamma_0\subset G/P$ as a closure of the trajectory of the one-parameter subgroup $\exp(tx)\subset G$ through the origin $o=eP$. It is known~\cite{dou:05} that we can complete $x$ to the basis $\{x,h,y\}$ of the subalgebra $\sll(2,\R)\subset \g$, such that $\deg h = 0$ and $\deg y = 1$. Then $\gamma_0$ is exactly the orbit of the corresponding subgroup in $G$ and, thus, is a rational curve or its finite covering. As we shall see, it is the most symmetric curve of type $x$. We call $\gamma_0$ \emph{a flat curve of type~$x$}.

Let $S$ be the symmetry group of $\gamma_0$, and let $\sg\subset \g$ be the corresponding subalgebra. It is known (see~\cite{dk}) that $\sg$ is a graded subalgebra of~$\g$, which can be also defined as follows:
\begin{equation}\label{symalg}
\begin{aligned}
\sg_{i} & = 0 \textrm{ for } i\le -2;\\
\sg_{-1} &= \langle x \rangle;\\
\sg_{i} &= (\ad x)^{-1}(\sg_{i-1}) = \{ u \in \g_i \mid [x,u]\in\sg_{i-1}\} \textrm{ for all } i\ge 0.
\end{aligned}
\end{equation}
Denote by $S^{(0)}$ the intersection of $S$ and $P$ and by $\sg^{(0)}$ the corresponding subalgebra of $\g$. It is clear that we have $\sg^{(0)}=\sum_{i\ge 0}\sg_i$.

Let us define two sequences of subgroups in $P$:
\begin{itemize}
\item $P^{(k)}$ is a subgroup of $P$ with the subalgebra $\pg^{(k)}=\sum_{i\ge k}\g_i$. It is equal to $P$ for $k=0$ and to $\prod_{i\ge k}\exp(\g_i)$ for $k\ge 1$. It is easy to see that $P^{(k)}$ is trivial for sufficiently large $k$.
\item $H^{(k)}$ is a subgroup of $P$ with the subalgebra $\hg^{(k)}=\sum_{i=0}^{k}\sg_i + \sum_{i>k}\g_i$. Denote by $S_0$ the intersection of $S$ and $G_0$. Then we have:
\[
H^{(k)} = S_0 \prod_{i=1}^k \exp(\sg_i) \prod_{i>k} \exp(\g_i), \quad \text{for any }k\ge 0.
\]
Lemma~\ref{lem:jet} below shows that $H^{(k)}$ is indeed a subgroup of $G$ and $\hg^{(k)}$ is a subalgebra of $\g$ for any $k\ge 0$. The subgroup $H^{(k)}$ becomes equal to $S^{(0)}=S\cap P$ for sufficiently large $k$. Note that $H^{(k)}$ contains $P^{(k+1)}$ for any $k\ge 0$.
\end{itemize}

The sequence of subgroups $H^{(k)}$ has a very natural geometric meaning:
\begin{lem}\label{lem:jet}
The subgroup $H^{(k)}$ is exactly the stabilizer of the $(k+1)$-th jet of the flat curve $\gamma_0$ of type $x$ at the origin $o=eP\in G/P$.
\end{lem}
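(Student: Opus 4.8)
The plan is to realize the $(k+1)$-jet stabilizer as an honest subgroup of $P$, compute it in exponential coordinates, and read off the subgroup/subalgebra assertions for free, since a stabilizer is automatically a closed subgroup and its Lie algebra is the infinitesimal stabilizer. As any $g$ fixing a jet based at $o$ must fix $o$, this stabilizer lies in $P$; denote it $\operatorname{St}_{k+1}$, so that the goal is $\operatorname{St}_{k+1}=H^{(k)}$. Writing $\g=\g_-\oplus\pg$ with $\g_-=\sum_{i<0}\g_i$, the map $v\mapsto\exp(v)P$ identifies a neighborhood of $0\in\g_-$ with one of $o$, and in this chart $\gamma_0(t)=\exp(tx)P$ is the ray $t\mapsto tx$. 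For $g\in P$ I factor $g\exp(tx)=\exp(w_g(t))\,p_g(t)$ with $w_g(t)\in\g_-$, $p_g(t)\in P$, so $g\cdot\gamma_0$ has coordinate curve $w_g$. Fixing a complement $\g_-=\langle x\rangle\oplus W$ (with $\g_{-1}=\langle x\rangle\oplus W_{-1}$) and the projection $\varpi\colon\g_-\to W$, I observe that two regular germs through $0$ tangent to $\langle x\rangle$ share the same unparametrized $(k+1)$-jet iff they agree to order $k+1$ after a reparametrization, and a reparametrization only alters the $\langle x\rangle$-component; hence $g\in\operatorname{St}_{k+1}$ iff $\varpi(w_g(t))=O(t^{k+2})$.

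The heart of the matter is a leading-order computation. From $\exp(u)\exp(tx)=\exp(tx)\exp(e^{-t\,\ad x}u)$, splitting off the $\pg$-part, one finds for homogeneous $u\in\g_j$ with $j\ge 0$
\[
w_{\exp(u)}(t)=tx+\frac{(-1)^{j+1}}{(j+1)!}(\ad x)^{j+1}u\;t^{j+1}+O(t^{j+2}),
\]
since $e^{-t\,\ad x}u$ first meets $\g_-$ at order $t^{j+1}$, with value proportional to $(\ad x)^{j+1}u\in\g_{-1}$. Thus $\varpi(w_{\exp(u)})$ has leading term a nonzero multiple of $\varpi\big((\ad x)^{j+1}u\big)\,t^{j+1}$. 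For $j>k$ this is already $O(t^{k+2})$, so all of $\g_j$ stabilizes the jet; for $0\le j\le k$ the coefficient must vanish, i.e.\ $(\ad x)^{j+1}u\in\langle x\rangle$, which by \eqref{symalg} means exactly $u\in\sg_j$. That such $u$ kills $\varpi(w_g)$ to \emph{all} orders, not just the leading one, I get from the symmetry interpretation: $\sg_j\subset\sg$ consists of infinitesimal symmetries of $\gamma_0$, which preserve every jet, so $\varpi(w_g)\equiv 0$. To pass from these homogeneous facts to the full Lie algebra of $\operatorname{St}_{k+1}$ I use the scaling symmetry $\exp(se)$: as $[x,e]=x$ we have $e\in\sg_0$, its flow reparametrizes $\gamma_0$ by $t\mapsto e^{-s}t$ and hence normalizes $\operatorname{St}_{k+1}$, while $\Ad(\exp(se))$ acts on $\g_j$ with weight $j$; invariance forces the Lie algebra of $\operatorname{St}_{k+1}$ to be graded, so it equals $\sum_{i=0}^k\sg_i+\sum_{i>k}\g_i=\hg^{(k)}$, proving in particular that $\hg^{(k)}$ is a subalgebra.

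I then promote this to the group using $P=G_0\ltimes P^{(1)}$ and the unique factorization $g=g_0g_+$. On the Levi part, $P^{(1)}$ fixes the tangent line $\langle x\rangle\bmod\pg$ (because $[\g_{\ge 1},x]\subset\pg$), so $g\in\operatorname{St}_{k+1}$ forces $g_0$ to preserve this line; since $g_0\cdot\gamma_0=\overline{\exp(\R\,\Ad(g_0)x)\,o}$, preserving the line is equivalent to $g_0\in S_0$, giving $\operatorname{St}_{k+1}\cap G_0=S_0$ and $g_+\in\operatorname{St}_{k+1}\cap P^{(1)}$. On the unipotent part, $\operatorname{St}_{k+1}\cap P^{(1)}$ is a closed subgroup of the simply connected unipotent group $P^{(1)}$, hence connected with Lie algebra $\hg^{(k)}\cap\sum_{i\ge 1}\g_i$, so it equals $\prod_{i=1}^k\exp(\sg_i)\prod_{i>k}\exp(\g_i)$. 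Assembling, $\operatorname{St}_{k+1}=S_0\prod_{i=1}^k\exp(\sg_i)\prod_{i>k}\exp(\g_i)=H^{(k)}$, which incidentally exhibits $H^{(k)}$ as a subgroup.

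The step I expect to be the main obstacle is the leading-order computation of $w_g$ together with the correct formulation of the unparametrized-jet condition through $\varpi$: one must check carefully that the reparametrization freedom is precisely the $\langle x\rangle$-direction, and that a degree-$j$ generator first perturbs the transverse part of the curve at order $t^{j+1}$ with coefficient controlled by $(\ad x)^{j+1}$. The clean way past the secondary difficulty—showing that elements of $\sg_j$ annihilate all higher orders rather than only the leading one—is to invoke that they are genuine infinitesimal symmetries, instead of expanding $w_g$ to high order by hand.
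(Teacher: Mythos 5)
Your argument is correct in substance but follows a genuinely different route from the paper's. The paper treats $k=0$ by identifying the $1$-jet with the line $\langle x\rangle+\pg$ in $\g/\pg$, noting that $P^{(1)}$ acts trivially there for degree reasons and that an element of $G_0$ preserving this line must satisfy $\Ad g(x)=\alpha x$ (since $\Ad g$ preserves the grading), and then disposes of $k\ge 1$ in one sentence by an inductive prolongation to the $k$-jet space, where the $(k+1)$-jet again becomes a tangent line. You instead work in the exponential chart on $\g_-$, compute the perturbation $w_{\exp(u)}$ of the coordinate curve explicitly, identify the infinitesimal stabilizer degree by degree via the leading coefficient $(\ad x)^{j+1}u$, use the grading element $e\in\sg_0$ to force the stabilizer's Lie algebra to be graded, and then reassemble the group from $G_0$ and $P^{(1)}$. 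This is more self-contained and makes the mechanism behind the recursion \eqref{symalg} completely explicit, at the cost of some bookkeeping that the paper's prolongation argument hides; your handling of the $G_0$-part coincides with the paper's.

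Two steps need repair, though neither is fatal. First, a closed subgroup of a simply connected unipotent group need \emph{not} be connected ($\Z\subset\R$), so your deduction that $\operatorname{St}_{k+1}\cap P^{(1)}=\exp\bigl(\hg^{(k)}\cap\sum_{i\ge1}\g_i\bigr)$ requires either observing that the jet stabilizer is Zariski-closed (an algebraic subgroup of a unipotent group in characteristic $0$ is connected), or, more in the spirit of your computation, stripping factors inductively: for $g_+=\exp(u_1)\exp(u_2)\cdots$ with $u_i\in\g_i$, the $t^2$-coefficient of $\varpi(w_{g_+})$ is controlled by $(\ad x)^2u_1$ alone, forcing $u_1\in\sg_1$; then $\exp(u_1)\in S$ and one passes to $\exp(-u_1)g_+$. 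Second, your displayed expansion of $w_{\exp(u)}$ is not quite right for $j=0$: passing from $\exp(e^{-t\ad x}u)$ to the $\exp(\g_-)\cdot P$ factorization produces a correction $\tfrac{t}{2}[[x,u],u]+\cdots$ in $\g_{-1}$ at the same order $t^{j+1}=t$ (indeed, for $u\in\g_0$ one has exactly $w_{\exp(u)}(t)=t\,e^{\ad u}x$). This does not damage the conclusion, because the extra terms are of higher order in $u$ and vanish upon differentiating $\exp(su)$ at $s=0$, so the infinitesimal stabilizer in $\g_0$ is still $\{u\mid[x,u]\in\langle x\rangle\}=\sg_0$; but the formula should be stated for the derivative in $u$, or the $j=0$ case should be delegated entirely to your separate $G_0$ argument. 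For $j\ge1$ the correction terms are $O(t^{2j+1})=O(t^{j+2})$ and your formula is exact as stated.
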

\begin{proof}
First, assume $k=0$. The 1-jet of the curve $\gamma_0$ at the origin is determined by its tangent line. Under the natural identification of $T_o(G/P)$ with $\g/\pg$ this line is identified with the subspace $\langle x \rangle + \pg$. The stabilizer of the 1-jet coincides with the stabilizer of this subspace under the isotropic action of $P$ on $\g/\pg$. As $\deg x = -1$, it is clear that this stabilizer contains $P^{(1)}$. As $P=G_0P^{(1)}$ and $P^{(1)}$ is a normal subgroup in $P$, it remains to show that the intersection of the stabilizer with $G_0$ coincides with $S_0=S \cap G_0$.

It is clear that $S_0$ lies in the stabilizer of the 1-jet of $\gamma_0$ at the origin. Let now $g\in G_0$ and $\Ad g (x) \subset \langle x \rangle + \pg$. It implies that $\Ad g (x) =\alpha x$ for some non-zero constant $\alpha$, and, hence, $g\in S_0$.

The case $k\ge 1$ can be treated in the similar manner. We only need to note that the $(k+1)$-th jet of $\gamma_0$ is completely determined by the tangent line to the lift of $\gamma_0$ to the space of $k$-jets.
\end{proof}

Let $\pi\colon G\to G/P$ be the canonical principal $P$-bundle over the generalized flag variety $G/P$ and let $\om\colon TG\to \g$ be the left-invariant Maurer--Cartan form on $G$. By a \emph{moving frame bundle} $E$ along $\gamma$ we mean any subbundle (not necessarily principal) of the P-bundle $\pi^{-1}(\gamma)\to \gamma$.

Let us recall the basic notions of filtered and graded vector spaces as they will be used throughout the paper. Let $V=\sum_{i\in\Z} V_i$ be a (finite-dimensional) graded vector space. Then $V$ has a natural decreasing filtration $\{V^{(k)}\}$, where $V^{(k)}=\sum_{i\ge k}V_i$. If $U$ is any linear subspace in $V$, then we define $U^{(k)}=U\cap V^{(k)}$ and denote by $\gr U=\sum_{i\in\Z}U_i$ the corresponding graded subspace of $V$. Here $U_i=U^{(i)}/U^{(i+1)}$ is naturally identified with a subspace of $V_i$.

The role of ``normalization conditions'' for the canonical moving frame is played by a graded subspace $W\subset\pg$ complementary to $\sg^{(0)} + [x,\pg]$. In other words, let
$W=\sum_{i\ge 0} W_i$, where $W_i$ is a subspace in $\g_i$ complementary to $\sg_i + [x,\g_{i+1}]$.
\begin{dfn}\label{def1}
We say that a subspace $U\subset \g$ is \emph{$W$-normal (with respect to $\sg$),} if the following two conditions are satisfied:
\begin{enumerate}
\item $\gr U =\sg$;
\item $U$ contains an element $\bar x = \sum_{i\ge -1}x_i$, where $x_{-1}=x$ and $x_{i}\in W_i=W\cap\g_i$ for all $i\ge 0$.
\end{enumerate}
\end{dfn}
Note that the element $\bar x$ (and, hence, all $x_i$, $i\ge 0$) is defined uniquely for any $W$-normal subspace $U\subset\g$. Indeed, suppose there is another element $\bar x'=\sum_{i\ge -1}x_i'$ with this property. Then the element $\bar x - \bar x' = \sum_{i\ge 0} (x_i-x_i')$ also lies in $U$. As $\gr U=\sg$, we get $x_0-x_0'\in \sg_0$. On the other hand, both $x_0$ and $x_0'$ lie in $W_0$, which is complementary to $\sg_0+[x,\g_1]$ and, in particular, has zero intersection with $\sg_0$. Hence, $x_0=x_0'$. Proceeding by induction by $i$, we prove in the similar manner that $x_i=x_i'$ for all $i\ge 0$.

The main results of this paper can be formulated as follows.
\begin{thm}\label{thm1}
Fix any graded subspace $W\subset\pg$ complementary to $\sg^{(0)} + [x,\pg]$. Then for any curve $\gamma$ of type $x$ there is a unique frame bundle $E\subset G$ along $\gamma$ such that the subspace $\om(T_pE)$ is $W$-normal for each point $p\in E$ .

If, moreover, $W$ is $S^{(0)}$-invariant, then $E$ is a principal $S^{(0)}$-bundle, and $\om|_{E}$ decomposes as a sum of the $W$-valued 1-form $\om_W$ and the $\sg$-valued 1-form $\om_{\sg}$. The form $\om_\sg$ is a flat Cartan connection on $\gamma$ modeled by $S/S^{(0)}$, and $\om_W$ is a vertical $S^{(0)}$-equivariant 1-form defining a fundamental set of differential invariants of $\gamma$.
\end{thm}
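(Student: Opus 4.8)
The plan is to realize $E$ by a step-by-step reduction of the full pullback bundle $\pi^{-1}(\gamma)$, normalizing the pulled-back Maurer--Cartan form degree by degree along the filtration of $\g$. First I would reformulate the $W$-normality condition in terms of lifts. A point of $\pi^{-1}(\gamma)$ is a frame $g\in G$ with $\pi(g)\in\gamma$; for a local lift $c(t)$ of $\gamma$ and a gauge $p(t)\in P$ the form transforms by $\om(c')\mapsto \Ad(p^{-1})\om(c')+p^{-1}p'$. Because $\gamma$ is an integral curve of $D$ of type $x$, the class of $\om(c')$ modulo $\pg$ is tangent to $\gamma$ and thus lies in $\langle x\rangle \bmod\pg$; so the lift may be chosen with $\om(c')=x+\sum_{i\ge 0}a_i$, $a_i\in\g_i$, the degree $-1$ part being normalized to $x$ by rescaling the parameter. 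Since $E$ fibers over the one-dimensional base $\gamma$ with $\om$ sending vertical vectors to the structure algebra and the horizontal direction to $\om(c')$, the two clauses of Definition~\ref{def1} say exactly that the vertical directions develop to a subspace with $\gr=\sg^{(0)}$ and that the horizontal development is the normal element $\bar x=x+\sum_{i\ge0}x_i$, $x_i\in W_i$. Constructing $E$ therefore amounts to gauging $\om(c')$ into the form $\bar x$ and reading off the residual freedom.

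Existence and uniqueness I would obtain by induction along the jet filtration, leaning on Lemma~\ref{lem:jet}. Set $E^{(-1)}=\pi^{-1}(\gamma)$. The type-$x$ condition first reduces the $G_0$-part of the structure group to the line-stabilizer $S_0$, producing a principal $H^{(0)}=S_0P^{(1)}$-subbundle $E^{(0)}$ (this is the $k=0$ case of Lemma~\ref{lem:jet}). Then build $E^{(0)}\supset E^{(1)}\supset\cdots$, where the step from $E^{(k)}$ to $E^{(k+1)}$ normalizes the degree-$k$ component $a_k$. At this stage the degree-$k$ part of the structure group is already $\sg_k$, and its Darboux derivative $p^{-1}p'$ shifts $a_k$ by $\sg_k$, while the subgroup $\exp(\g_{k+1})\subset H^{(k)}$ shifts $a_k$ by $[x,\g_{k+1}]$ via $\Ad(\exp(-\xi))x=x+[x,\xi]+\cdots$ and is thereby cut down to $\exp(\sg_{k+1})$. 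The defining complementarity $\g_k=W_k\oplus(\sg_k+[x,\g_{k+1}])$ furnishes a unique normalization $a_k\mapsto x_k\in W_k$ — the uniqueness being precisely that of $\bar x$ noted after Definition~\ref{def1} — and the structure group passes from $H^{(k)}$ to $H^{(k+1)}$. Since $P^{(k)}$ is trivial and $H^{(k)}=S^{(0)}$ for large $k$, the chain stabilizes to the desired bundle $E$, with residual structure group $S^{(0)}$.

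For the second assertion, assume $W$ is $\Ad(S^{(0)})$-invariant. Left-invariance of $\om$ gives $R_s^*\om=\Ad(s^{-1})\om$, so $\om(T_{gs}(E\cdot s))=\Ad(s^{-1})\om(T_gE)$. As $\sg$ is $\Ad(S)$-invariant and $W$ is $\Ad(S^{(0)})$-invariant, $\Ad(s^{-1})$ carries $W$-normal subspaces to $W$-normal subspaces; by the uniqueness just established $E$ is $S^{(0)}$-invariant, and since $S^{(0)}$ acts simply transitively on the fibers, $E$ is a principal $S^{(0)}$-bundle. Because $W\cap\sg=0$ and $\om|_E$ takes values in $\sg\oplus W=(\langle x\rangle\oplus\sg^{(0)})\oplus W$, projecting along this direct sum splits $\om|_E=\om_\sg+\om_W$ into $S^{(0)}$-equivariant pieces. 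On vertical vectors $\om_\sg$ reproduces the fundamental fields of $\sg^{(0)}$, and on the horizontal development it equals $x$ (the $\sg$-part of $\bar x$, all $x_i$ lying in $W$); hence $\om_\sg$ restricts to a linear isomorphism $T_gE\to\sg$ and is $S^{(0)}$-equivariant, i.e.\ a Cartan connection modeled on $S/S^{(0)}$. Its curvature is horizontal, so it vanishes identically because the base $\gamma$ is one-dimensional, which gives flatness. Finally $\om_W$ kills vertical vectors (as $\sg^{(0)}\cap W=0$), hence is semibasic, $\pg$-valued, and $S^{(0)}$-equivariant, and its horizontal value $\sum_{i\ge0}x_i$ records exactly the fundamental invariants of $\gamma$.

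The main obstacle is the inductive normalization step: showing that at each degree the combined algebraic ($\Ad$) and derivative ($p^{-1}p'$) gauge freedom is captured precisely by the splitting $\g_k=W_k\oplus(\sg_k+[x,\g_{k+1}])$, so that the degree-$k$ component admits a unique normalization into $W_k$ while the residual structure group reduces exactly to $H^{(k)}$ and not to something larger. The bookkeeping of the $t$-dependent $p^{-1}p'$ contributions is the delicate point, and Lemma~\ref{lem:jet} is what makes it tractable, since it identifies the stabilizer after each reduction intrinsically as a jet-stabilizer.
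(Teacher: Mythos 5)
Your proposal is correct and follows essentially the same route as the paper: a zero-order reduction via the jet-stabilizer Lemma~\ref{lem:jet}, then an inductive degree-by-degree normalization in which $\exp(\g_{k+1})$ shifts the degree-$k$ component by $[x,\g_{k+1}]$ and the splitting $\g_k=W_k\oplus(\sg_k+[x,\g_{k+1}])$ cuts the residual freedom down to $\sg_{k+1}$, followed by the decomposition $\om|_E=\om_\sg+\om_W$ with flatness forced by $\dim\gamma=1$. The only difference is presentational (local lifts and Darboux derivatives versus the paper's structure function $c(p)=\om(T_pE)/\hg$), plus a minor caveat: for non-invariant $W$ the residual group at each step is only $P^{(k+1)}$ rather than $H^{(k)}$, which the paper notes explicitly but which does not affect your existence and uniqueness argument.
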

The construction of thus moving frame bundle will be done in the next section.
\begin{rem}
The homogeneous space $S/S^{(0)}$ is exactly the model for the flat (or distinguished) curve of type $x$. As the it is (locally) isomorphic to $\RP^1$, the Cartan connection $\om_\sg$ defines the projective parameter on $\gamma$ in the canonical way.
\end{rem}
\begin{rem}
Here by \emph{a fundamental set of invariants} we mean that all the (relative or absolute) differential invariants of $\gamma$ can be obtained from the 1-form $\om_W$ and all its covariant derivatives defined by the connection $\om_\sg$. In particular, the curve $\gamma$ is locally flat if and only if $\om_W$ vanishes identically.
\end{rem}

Note that the $S^{(0)}$-invariant complement $W\subset\pg$ always exists, if the Lie algebra $\sg$ is reductive, or, what is equivalent, if the restriction of the Killing form of $\g$ to $\sg$ is non-degenerate.
Indeed, let $\sg^{\perp}$ be its orthogonal complement with respect to the Killing form of $\g$. As Killing form $K$ of $\g$ is compatible with the grading (i.e., $K(\g_i,\g_j)=0$ for all $i+j\ne 0$), the subspace $\sg^{\perp}$ is also graded: $\sg^{\perp} = \sum_{i\in \Z} \sg^{\perp}_i$, where $\sg^{\perp}_i = \sg^{\perp} \cap \g_i$.

Define the subspace $W\subset\pg$ as:
\begin{equation}\label{eq:w}
W = \{ u\in \sg^{\perp}\cap\pg \mid [u,y] = 0 \},
\end{equation}
where $y$ is an element of degree $+1$ in the $\sll_{2}$-subalgebra containing $x$.
\begin{lem}
The space $W$ is a $\sg^{(0)}$-invariant complement to $\sg^{(0)}+[x,\g_{+}]$ in $\pg$.
\end{lem}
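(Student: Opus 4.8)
The plan is to push everything through the representation theory of the $\sll(2,\R)$-triple $\{x,h,y\}\subset\sg$. Since $\sg$ is a subalgebra containing this triple and $\sg^\perp$ is its Killing-orthocomplement, $\sg^\perp$ is $\ad\,\sll(2,\R)$-invariant, and by grading-compatibility of $K$ it is a graded $\sll(2,\R)$-submodule of $\g$. I would start by rewriting $W=\sg^\perp\cap\pg\cap\ker(\ad y)$; this agrees with the definition because $[u,y]=0\Leftrightarrow\ad y(u)=0$, and $\ker(\ad y)$ is graded since $\ad y$ raises degree by one. In particular $W=\bigoplus_{i\ge0}W_i$ with $W_i=\ker(\ad y)\cap\sg^\perp_i$.

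For the complement statement I would invoke the elementary $\sll(2,\R)$ decomposition $\sg^\perp=\ker(\ad y)\oplus[x,\sg^\perp]$: in each irreducible summand the top line is annihilated by $\ad y$, while $\ad x$ carries the module onto the sum of all lines below the top, so the two pieces meet only in $0$ and span. As $\ad x$ and $\ad y$ are homogeneous this is a graded splitting, giving $\sg^\perp_i=W_i\oplus[x,\sg^\perp_{i+1}]$ for all $i$. Since $x\in\sg$, $\ad x$ preserves both $\sg$ and $\sg^\perp$; hence from $\g_i=\sg_i\oplus\sg^\perp_i$ and $[x,\g_{i+1}]=[x,\sg_{i+1}]\oplus[x,\sg^\perp_{i+1}]$ with $[x,\sg_{i+1}]\subset\sg_i$ I obtain $\sg_i+[x,\g_{i+1}]=\sg_i\oplus[x,\sg^\perp_{i+1}]$. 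Combining the last two identities yields $\g_i=W_i\oplus(\sg_i+[x,\g_{i+1}])$ for every $i\ge0$, and summing over $i\ge0$ gives $\pg=W\oplus(\sg^{(0)}+[x,\g_+])$.

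The $\sg^{(0)}$-invariance rests on one structural fact that I would isolate first: reductivity of $\sg$ forces $\sg_i=0$ for $i\ge2$ and $\sg_1=\langle y\rangle$. Indeed $\sg_i=0$ for $i\le-2$ by the definition of $\sg$, and nondegeneracy of $K$ on $\sg$ requires the pairing $\sg_i\times\sg_{-i}\to\R$ to be nondegenerate for each $i$; this is impossible for $i\ge2$ unless $\sg_i=0$, and for $i=1$ it forces $\dim\sg_1=\dim\sg_{-1}=1$, whence $\sg_1=\langle y\rangle$. Granting this, take $a\in\sg^{(0)}$ and $u\in W$. Then $[a,u]\in\sg^\perp$ (invariance of $\sg^\perp$ under $\ad\sg$) and $[a,u]\in\pg$, while $\ad y([a,u])=[[y,a],u]+[a,[y,u]]=[[y,a],u]$ since $[y,u]=0$; but $[y,a]\in[\sg_1,\sg_{\ge0}]\subseteq\sg_{\ge1}=\langle y\rangle$, so $[[y,a],u]\in\R\,[y,u]=0$. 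Hence $[a,u]\in\ker(\ad y)$, i.e. $[a,u]\in W$.

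The step I expect to be the main obstacle is precisely this invariance: a priori $\ker(\ad y)$ is not stable under the $\sll(2,\R)$-action, so there is no formal reason for $\sg^{(0)}$ to preserve $W$, and a direct Jacobi computation only reduces the problem to showing $[\sg_{\ge1},W]=0$. What makes it work is the observation that in the reductive case $\sg_{\ge1}$ collapses to $\langle y\rangle$, which annihilates $W$ by definition; so I would take particular care to make the Killing-nondegeneracy argument for $\sg_1=\langle y\rangle$ completely rigorous. By contrast the complement property is routine once the graded $\sll(2,\R)$-splitting is in place, the only point to verify being the behaviour of $\ad x$ and $\ad y$ on a single irreducible summand.
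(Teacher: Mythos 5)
Your argument is correct and follows essentially the same route as the paper: the complement property comes from the graded $\sll(2,\R)$-splitting of $\sg^{\perp}$ into highest-weight vectors (i.e.\ $\ker(\ad y)$) plus $[x,\sg^{\perp}]$, and the $\sg^{(0)}$-invariance comes from non-degeneracy of $K|_{\sg}$ forcing $\sg_1=\R y$ and $\sg_{\ge 2}=0$, so that $\R y$ is an ideal in $\sg^{(0)}$. You merely spell out the Jacobi-identity computation that the paper leaves implicit.
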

\begin{proof}
First, note that $\sg^{\perp}$ is $\sg$-invariant and complementary to $\sg$. In particular, it is also stable with respect to the action of the $\sll_2$-subalgebra containing $x$. From the representation theory of $\sll_2$, it immediately follows that the space $W$ defined above is spanned by highest weight vectors (of non-negative degree) in the decomposition of $\sg^{\perp}$ into the direct sum of irreducible $\sll_2$-modules. This immediately implies that $W$ is complementary to $[x,\sg^{\perp}]$ in
$\sg^{\perp}\cap\pg$.

As the restriction of the Killing form $K$ to $\sg$ is non-degenerate and $\sg_{-1}=\R x$, we see that $\sg_{+1}=\R y$ and, in particular, $\R{y}$ is an ideal in $\sg^{(0)}=\sg_0+\sg_1$.  This proves that $W$ is $\sg^{(0)}$-invariant.
\end{proof}
\begin{rem}
The invariant complement $W$ may exist even if the Lie algebra $\sg$ is not reductive. See Subsection~\ref{g2-ex} for such example.
\end{rem}

\section{Canonical frame bundle}\label{sec:3}
\subsection{General reduction procedure}
Let $\gamma$ be an arbitrary unpara\-metrized curve in $G/P$ of type $\gamma_0$ and let $\pi\colon E\to \gamma$ be a moving frame bundle along the curve $\gamma$. For each point $p\in E$ the image $\om(T_pE)$ is a subspace in the Lie algebra $\g$. We say that $E$ is an $H$-bundle, where $H$ is a subgroup of $P$, if $E$ is preserved by the right action of $H$ on $G$. Note that we do not assume that $E$ is a principal $H$-bundles, i.e. the fibers of $\pi\colon E\to H$ are unions of orbits of $H$, but each fiber may in general be of larger dimension than $H$.

From definition of the Maurer--Cartan form we see that $\om(T_pE)$ contains $\hg$ for all $p\in E$, where $\hg$ is a subalgebra of $\g$ corresponding to the subgroup $H$. Thus, we can define a \emph{structure function} of the moving frame $H$-bundle $E$ as follows:
\[
c\colon E \to \Gr_r(\g/\hg),\quad p\mapsto \om(T_pE) /\hg\subset \g/\hg,
\]
where $r=\dim E - \dim H$ and $\Gr_r(\g/\hg)$ is a Grassmann variety of all $r$-dimensional subspaces in $\g/\hg$.

As the Maurer--Cartan form $\om$ satisfies the condition $R_g^*\om = \Ad g^{-1}\circ\om$ for any $g\in P$, we see that the structure function $c$ is $H$-equivariant:
\[
c(ph) = h^{-1}.c(p), \quad p\in E, h\in H,
\]
where the action of $H$ on $\Gr_r(\g/\hg)$ is induced from the adjoint action of $H$ on $\g$.

One of the main ideas of the reduction procedure is to impose certain \emph{normalization} conditions on the structure function $c$ and reduce $E$ to a subbundle $E'$ consisting of all points of $E$, such that $c(p)$ satisfies these imposed conditions. In general, $E'$ might not be a smooth submanifold of $E$, as $c(E)$ may not be transversal to the orbits of $H$ on $\Gr_r(\g/\hg)$. However, in our case we are always able to choose the normalization conditions in such a way that $E'$ is a regular submanifold in $E$ and, thus, is also a frame bundle along~$\gamma$.

\subsection{Zero order reduction}\label{ss:zero}
Let $\pi\colon G\to G/P$ be the canonical projection. Define the moving frame bundle $E_0$ as a set of all elements $g\in G$ satisfying the properties $g.o\in \gamma$ and $g_*(x)\in T_{g.o}\gamma$. Let $\pi_0$ be the restriction of $\pi$ to $E_0$. It is clear that $\pi_0\colon E_0\to \gamma$ is a principal $H^{(0)}$-bundle, as $H^{(0)}$ is exactly the subgroup of $P$ preserving the 1-jet of $\gamma_0$ at the origin.

Let us describe the structure function of the frame bundle $E_0$. By construction $\om(T_pE_0) \subset \langle x \rangle + \pg$ for all $p\in E_0$. As $r = \dim E_0 - \dim H^{(0)} = 1$, the structure function $c_0$ of the moving frame bundle $E_0$ takes values in the projectivization of the vector space $(\R x + \pg)/\hg^{(0)}\equiv (\R x + \g_0)/\sg_0$. In particular, for each $p\in E_0$ we can uniquely define an element $\chi_0(p)\in \g_0/\sg_0$ such that $x+\chi_0(p)\in c(p)=\om(T_pE_0)$. The structure function $c$ of $E_0$ is completely determined by this function $\chi_0\colon E_0 \to \g_0/\sg_0$.

\subsection{Further reductions and the canonical moving frame}
From now on we assume that we fixed a graded subspace $W\subset \pg$ complementary to $\sg^{(0)} + [x,\pg]$. Set $W_i=W\cap \g_i$ for any $i\ge 0$.

Let us introduce a slight generalization of the above notion of $W$-normality.

\begin{dfn}
We say that a subspace $U\subset \g$ is \emph{$W$-normal up to order $k$}, if it satisfies the following conditions:
\begin{enumerate}
\item $U \supset \pg^{(k)}$;
\item $\gr U = \sg+\pg^{(k)}$;
\item  $U$ contains an element $\bar x = \sum_{i=-1}^{k-1} x_i$, where $x_{-1}=x$ and $x_{i}\in W_i=W\cap\g_i$ for all $i=0,\dots,k-1$.
\end{enumerate}
\end{dfn}
Since $\g$ has a finite grading, $\pg^{(N)}=0$ for sufficiently large $N$ and the condition of $W$-normality up to order $N$ is equivalent to the notion of $W$-normality from the previous section.

We construct a canonical moving frame bundle for a given curve $\gamma$ via the sequence of reductions $E_k$ of the frame bundle $E_0$, where each $E_k$ is a moving frame $P^{(k+1)}$-bundle along $\gamma$.

For $k=0$ we can take as $E_0$ the zero-order frame bundle constructed above. Indeed, as $E_0$ is a principal $H^{(0)}$-bundle and $H^{(0)}$ contains $P^{(1)}$, it is invariant with respect to the right action of $P^{(1)}$ on $G$.

Assume by induction that for some $k\ge1$ we have constructed a moving frame $P^{(k)}$-bundle $E_{k-1}$, such that if $c_{k-1}$ is its structure function, then  $c_{k-1}(p)$ is $W$-normal up to order $k-1$ for any point $p\in E_{k-1}$.  Then we define $E_{k}$ as the following reduction of $E_{k-1}$:
\begin{equation}\label{reduct}
E_{k} = \{ p\in E_{k-1} \mid c_{k-1}(p) \textrm{ is $W$-normal up to order $k$} \}.
\end{equation}

\begin{lem}
\begin{enumerate}
\item The frame bundle $E_{k}$ is well-defined and is stable with respect to the
right action of the group $P^{(k+1)}$.
\item The structure function $c_k(p)=\om(T_pE_k)$, $p\in E_k$, satisfies the conditions: $c_k(p)\supset \pg^{(k+1)}$ and $\gr c_k(p) = \sg+\pg^{(k+1)}$ for any $p\in E_k$.
\item If, in addition, $W$ is $S^{(0)}$-invariant, then $E_k$ is stable with respect to the right action of the subgroup $H^{(k)}$.
\end{enumerate}
\end{lem}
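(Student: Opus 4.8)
The plan is to carry out the inductive step concretely, by following the distinguished element of the structure function and normalizing its top ``new'' graded component by means of the residual group action. The input from the previous stage is that $c_{k-1}(p)$ contains a distinguished element $\bar x = x + x_0 + \dots + x_{k-1}$ whose components through degree $k-2$ lie in $W$ and are already uniquely determined, while $\gr_{k-1}c_{k-1}(p)=\sg_{k-1}$ records that the degree $(k-1)$ component $x_{k-1}\in\g_{k-1}$ is so far pinned down only modulo $\sg_{k-1}$; this generalizes the function $\chi_0$ of the zero-order reduction. The whole argument rests on the $\ad x$-recursion \eqref{symalg} defining $\sg$ together with the $\sll_2$-type decomposition $\g_{k-1}=W_{k-1}\oplus\bigl(\sg_{k-1}+[x,\g_k]\bigr)$ built into the choice of $W$.

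For (1) and (2) I would first compute how $x_{k-1}$ transforms under the right action of $P^{(k)}$. Since $R_g^*\om=\Ad g^{-1}\circ\om$, the action of $\exp\xi$ with $\xi\in\g_k$ transforms the structure function by $e^{-\ad\xi}$, and in degree $k-1$ this shifts $x_{k-1}$ by $[x,\xi]$; the remaining brackets $[\xi,x_i]$ have degree $\ge k$ and hence disturb neither the already normalized components $x_0,\dots,x_{k-2}$ nor the degree $(k-1)$ term beyond this shift. As $W_{k-1}$ is complementary to $\sg_{k-1}+[x,\g_k]$, the class of $x_{k-1}$ modulo $\sg_{k-1}$ can be, and must be, moved into $W_{k-1}$ by a suitable $\xi$; this is precisely the condition cutting out $E_k$, and the complementarity of $W_{k-1}$ gives transversality of the structure map to the $\exp\g_k$-orbits, so $E_k$ is a regular submanifold, hence a frame bundle. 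Because $\exp(\g_{\ge k+1})$ alters $\bar x$ only in degrees $\ge k$, it preserves the normalization of $x_{k-1}$, and $E_k$ is stable under $P^{(k+1)}$, giving (1). For (2) the key point is that by \eqref{symalg} the stabilizer inside $\g_k$ of the normalization condition (taken modulo $\sg_{k-1}$) is exactly $(\ad x)^{-1}(\sg_{k-1})=\sg_k$: the directions of $\exp\g_k$ transverse to $\sg_k$ are consumed by the reduction, so in passing to $T_pE_k$ the $k$-th graded piece of the structure function drops from $\g_k$ to $\sg_k$, while lower pieces stay $\sg_i$ and the pieces of degree $>k$ remain $\g_i$. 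Hence $\gr c_k(p)=\sg+\pg^{(k+1)}$, and since $\pg^{(k+1)}$ stays vertical, $c_k(p)\supset\pg^{(k+1)}$.

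For (3) I would observe that modulo $P^{(k+1)}$ the group $H^{(k)}$ is generated by $S_0$ and the subgroups $\exp(\sg_i)$ for $1\le i\le k$, so by (1) it suffices to show these preserve every normalization $x_j\in W_j$, $j\le k-1$. The stability under each $\exp(\sg_i)$ is already encoded in the stabilizer computation above, level by level, so the genuinely new input is stability under $S_0$, and this is exactly where the hypothesis that $W$ is $S^{(0)}$-invariant enters: for $h\in S_0$ one has $\Ad h(x)\in\langle x\rangle$ and, since $\sg$ is a graded subalgebra, $\Ad h(\bar x)$ again has leading term $x$ with the correct $\sg$-structure, while $S^{(0)}$-invariance guarantees $\Ad h$ preserves each $W_j$; therefore the $W$-components of $\bar x$ are carried to $W$-components and the normalization conditions are $H^{(k)}$-invariant. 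Combined with the $P^{(k+1)}$-stability of (1), this yields stability under all of $H^{(k)}$.

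I expect the main obstacle to be the regularity statement in (1) together with the precise bookkeeping of which directions are consumed: one must check simultaneously that the normalization of $x_{k-1}$ is achievable, that it leaves the previously normalized components $x_0,\dots,x_{k-2}$ intact (a degree count), and that the fibre direction surviving inside $\g_k$ is precisely $\sg_k$ and nothing more. This last point is what makes the graded structure function collapse by exactly one level to $\sg+\pg^{(k+1)}$ and, in the limit, identifies the residual structure group with $S^{(0)}$. The transversality needed for $E_k$ to be a smooth subbundle is nothing other than the complementarity defining $W$, so the choice of $W$ is doing the real work at every stage.
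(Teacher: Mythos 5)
Your proposal is correct and follows essentially the same route as the paper: track the distinguished element $\bar x$ of the structure function, compute its degree-$(k-1)$ shift by $[x,u]$ under the residual $\exp(\g_k)$-action via equivariance of the Maurer--Cartan form, use the complementarity $\g_{k-1}=W_{k-1}\oplus(\sg_{k-1}+[x,\g_k])$ to normalize and to identify the stabilizer as $(\ad x)^{-1}(\sg_{k-1})=\sg_k$, and invoke $S^{(0)}$-invariance of $W$ for part (3). The extra attention you give to transversality and to why the lower components $x_0,\dots,x_{k-2}$ are undisturbed (a degree count) is consistent with, and slightly more explicit than, the paper's argument.
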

\begin{proof}
Consider the structure function $c_{k-1}$ of the frame bundle $E_{k-1}$. By induction, the subspace $c_{k-1}(p)$ is $W$-normal up to order $k-1$. Thus, $c_{k-1}(p)$ contains an element:
\begin{equation}\label{eq:chi}
\chi(p) = x + \sum_{i=0}^{k-1}\chi_i(p),
\end{equation}
where $\chi_i(p)\in W_i$ are uniquely defined for $i=0,\dots,k-2$, and $\chi_{k-1}(p)$ is a certain element of $\g_{k-1}$ uniquely defined modulo $\sg_{k-1}$. Note that the condition~\eqref{reduct} is equivalent to:
\begin{equation}\label{reduct1}
\chi_{k-1}(p)\in W_{k-1}+\sg_{k-1}.
\end{equation}

Further, we see that:
\begin{align*}
\chi(pg) &= \chi(p) \mod \sg_{k-1}\quad\text{for any }g\in P^{(k+1)},\\
\chi(p\exp(u)) &= \chi(p)-[u,x] \mod \sg_{k-1} \quad\text{for any }u\in\g_k.
\end{align*}
The latter equation follows from the equivariance of Maurer--Cartan form:
\[
c_{k-1}(p\exp(u)) = \Ad (\exp(-u) )c_{k-1}(p),\quad \text{for any }p\in E_{k-1}, u\in\g_{k},
\]
 and the equality:
\[
\Ad \exp(-u) = \sum_{i=0}^{\infty} \frac{\ad^i(-u)}{i!},
\]
where this sum is actually finite for any element $u\in \g_k$, $k>0$.

As $P^{(k)}=\exp(\g_k)P^{(k+1)}$,  it follows that the condition~\eqref{reduct1} can always be achieved by an appropriate choice of the element $u\in\g_k$. Moreover, as $W_{k-1}$ is complementary to $\sg_{k-1}+[x,\g_k]$, the set of all elements $u\in\g_k$, which satisfy the condition $[x,u]\in\sg_{k-1}+W_{k-1}$ coincides with $\sg_k=\{u\in\g_k \mid [x,u]\in\sg_{k-1}\}$.
This proves that the reduction $E_k$ is well-defined and is stable with respect to $\exp(\sg_k)P^{(k+1)}$.

If $W$ is not $S^{(0)}$-invariant, we just reduce this group to $P^{(k+1)}$.  However, if $W$ is $S^{(0)}$-invariant, then the decomposition~\eqref{eq:chi} is stable with respect to the subgroup $H^{(k-1)}$, and, hence, the condition~\eqref{reduct1} defines the reduction of the principal $H^{(k-1)}$-bundle $E_{k-1}$ to the subgroup $H^{(k)}$.
\end{proof}

Thus, we see that $E_k$ is a moving $P^{(k+1)}$-bundle. If, in addition, the subspace $W$ is $S^{(0)}$-invariant, we treat $E_k$ as a principal $H^{(k)}$-bundle. As $\hg^{(k)}=\sg^{(0)}$ for sufficiently large $k$ and $\sg=\langle x \rangle + \sg^{(0)}$ we see that we finally get the moving frame bundle $E$ along $\gamma$ that satisfies conditions of Theorem~\ref{thm1}.

\subsection{The case of an invariant complementary subspace}
Suppose now that the subspace $W$ is in addition $S^{(0)}$-invariant. As it was shown above,
at each step $\pi\colon E_k\to \gamma$ is a principal $H^{(k-1)}$-bundle. For sufficiently large $k$ we get $H^{(k-1)}=S^{(0)}$ and $E=E_k$ is a principal $S^{(0)}$-bundle equipped with a 1-form $\om|_{E}$.
This implies that $\om(T_pE)$ contains $\sg^{(0)}$ for all $p\in E$. Hence, from $W$-normality condition we immediately get:
\[
\sg^{(0)}\subset \om(T_pE) \subset W\oplus\sg\quad\text{for all }p\in E.
\]
Let us decompose $\om|_{E}$ into the sum:
\[
\om|_{E} = \om_W + \om_{\sg},
\]
where the 1-forms $\om_W$ and  $\om_{\sg}$ take values in $W$ and $\sg$ respectively.

\begin{lem} The 1-form $\om_{\sg}$ is a flat Cartan connection with model $S/S^{(0)}$.
\end{lem}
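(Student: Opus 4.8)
The plan is to verify that $\om_\sg$ satisfies the three defining properties of a Cartan connection of type $(\sg,S^{(0)})$ on the principal $S^{(0)}$-bundle $\pi\colon E\to\gamma$ — pointwise isomorphism, reproduction of fundamental vector fields, and $S^{(0)}$-equivariance — and then to deduce flatness from the fact that $\gamma$ is one-dimensional. Throughout I would write $\pi_\sg\colon W\oplus\sg\to\sg$ for the projection with kernel $W$, so that $\om_\sg=\pi_\sg\circ(\om|_E)$. This is well defined: since $W\subset\pg$ while $\sg\cap\pg=\sg^{(0)}$ and $W\cap\sg^{(0)}=0$, we get $W\cap\sg=0$, and the inclusion $\om(T_pE)\subset W\oplus\sg$ was already established just above.

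The technical heart is the \emph{absolute parallelism}: I claim that $\om_\sg|_p\colon T_pE\to\sg$ is a linear isomorphism for every $p\in E$. First, $\om|_E$ is injective because $\om$ is the Maurer--Cartan form, so $U:=\om(T_pE)$ has the same dimension as $T_pE$; by $W$-normality $\gr U=\sg$, hence $\dim U=\dim\sg$. It therefore suffices to show that $\pi_\sg|_U$ is injective, i.e. that $U\cap W=0$. Given a nonzero $u\in U\cap W$, write $u=\sum_{i\ge 0}u_i$ with $u_i\in W_i$ and let $u_j$ be its lowest nonzero component. Then $u\in U^{(j)}:=U\cap\sum_{i\ge j}\g_i$, and under the identification $U^{(j)}/U^{(j+1)}=\sg_j$ coming from $\gr U=\sg$ its class corresponds to $u_j$, forcing $u_j\in\sg_j$. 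But $u_j\in W_j$ and $W_j$ is complementary to $\sg_j+[x,\g_{j+1}]\supset\sg_j$, so $u_j\in W_j\cap\sg_j=0$, a contradiction. Hence $\pi_\sg|_U$ is an isomorphism, and so is $\om_\sg|_p=\pi_\sg\circ\om|_p$.

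The remaining two axioms are formal. For the reproduction of fundamental vector fields, if $A\in\sg^{(0)}=\mathrm{Lie}(S^{(0)})$ and $\zeta_A$ is the fundamental field of the right $S^{(0)}$-action, then $\om(\zeta_A)=A$ by the standard property of the Maurer--Cartan form, and $\pi_\sg(A)=A$ since $A\in\sg$, so $\om_\sg(\zeta_A)=A$. For equivariance, $R_h^*\om=\Ad(h^{-1})\circ\om$ for all $h\in S^{(0)}$; since both $W$ and $\sg$ are $\Ad(S^{(0)})$-invariant (the former by hypothesis, the latter because $S^{(0)}\subset S$ acts on $\sg=\mathrm{Lie}(S)$), the map $\Ad(h^{-1})$ preserves the splitting $W\oplus\sg$ and hence commutes with $\pi_\sg$. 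Applying $\pi_\sg$ to $R_h^*\om=\Ad(h^{-1})\circ\om$ then yields $R_h^*\om_\sg=\Ad(h^{-1})\circ\om_\sg$.

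Finally, for flatness I would argue that the curvature two-form $K=d\om_\sg+\tfrac12[\om_\sg,\om_\sg]$ vanishes identically. It is first of all horizontal: for $A\in\sg^{(0)}$, Cartan's formula together with the two axioms just verified gives $\iota_{\zeta_A}K=L_{\zeta_A}\om_\sg+[A,\om_\sg]=-[A,\om_\sg]+[A,\om_\sg]=0$. Since $\gamma$ is one-dimensional, for any $X,Y\in T_pE$ the images $d\pi(X),d\pi(Y)$ are linearly dependent, so $Y-\lambda X$ is vertical for a suitable $\lambda$; horizontality then forces $K(X,Y)=K(X,Y-\lambda X)=0$. Thus $K\equiv0$, and a flat Cartan connection is locally equivalent to the Maurer--Cartan form of $S$, with model $S/S^{(0)}\cong\RP^1$. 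I expect the genuine work to lie in the absolute parallelism, where the graded $W$-normality is indispensable; flatness is the step one might fear most, but it is automatic here precisely because a one-dimensional base admits no nonzero horizontal two-form. This is why I would \emph{avoid} the alternative route of splitting the Maurer--Cartan equation $d\om+\tfrac12[\om,\om]=0$ along $W\oplus\sg$, which is obstructed by the cross term $[\om_W,\om_\sg]$ whose $\sg$-component would otherwise have to be controlled by hand.
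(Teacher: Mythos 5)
Your proposal is correct and follows essentially the same route as the paper: verify the three Cartan-connection axioms for $\om_\sg$ on the principal $S^{(0)}$-bundle $E$ and observe that flatness is automatic because the base $\gamma$ is one-dimensional. The only (cosmetic) difference is in the absolute-parallelism step, where you prove injectivity of $\pi_\sg|_{\om(T_pE)}$ via $\om(T_pE)\cap W=0$, while the paper notes surjectivity via $\om_{\sg}(T_pE)+\sg^{(0)}=\sg$; both reduce to the same dimension count coming from $\gr\om(T_pE)=\sg$.
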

\begin{proof}
From the properties of the Maurer--Cartan $\om$ it follows that $\om_{\sg}$ is $S^{(0)}$-equivariant and takes the fundamental vector fields on $E$ to the corresponding elements of the subalgebra $\sg^{(0)}$. By construction $\om_{\sg}(T_pE) + \sg^{(0)} = \sg$ for all $p\in E$, and, hence $\om_{\sg}$ is an isomorphism of $T_pE$ and $\sg$ for all $p\in E$. Thus, $\om_{\sg}$ is indeed a Cartan connection on $E$ modeled by the homogeneous space $S/S^{(0)}$.

As $\gamma$ is one-dimensional, this Cartan connection is necessarily flat.
\end{proof}

Consider now the 1-form $\omega_W$. It is clearly $S^{(0)}$-equivariant. It is also vertical, as $\om|_{E}$ takes all vertical vector fields to $\sg$. Finally, it is easy to see that $\gamma$ is locally equivalent to $\gamma_0$ (or, in other words, $\gamma$ is locally flat), if and only if $\om$ takes values in $\sg$, or, in other words, when $\om_W$ vanishes identically. Hence, $\om_W$ constitutes a fundamental system of invariants of $\gamma$.

\begin{rem}
Note that there are many more local differential invariants of $\gamma$. For example, we can construct
new invariants by taking total derivatives of $\om_W$ with respect to the Cartan connection $\om_{\sg}$.
\end{rem}

\section{Examples}\label{sec:4}
\subsection{Curves in projective spaces}\label{ssec:41}
As a very first example, consider a non-degenerate curve $\gamma\subset \RP^k$. Non-degeneracy means
that $\gamma$ does not lie in any proper linear subspace of $\RP^k$. Taking the osculating flag of $\gamma$,
we can also consider $\gamma$ as a curve in the complete flag manifold $F_{1,\dots,k-1}(k)$. We consider this flag
manifold as a generalized flag variety $G/P$, where $G=SL(k+1,\R)$ and $P$ is a Borel subgroup of $G$
consisting of all non-degenerate upper-triangular matrices. As a homogeneous curve $\gamma_0$ we
take the closure of the trajectory of $\exp(tx)$, where
\[
x = \begin{pmatrix} 0 & 0 & \dots & 0 & 0 \\ 1 & 0 & \dots & 0 & 0 \\ 0 & 1 & \ddots & 0 & 0 \\
0 & 0 & \ddots & 0 & 0 \\ 0 & 0 & \dots  & 1 & 0
\end{pmatrix}
\]
It is easy to  see that $\gamma_0$ is exactly the osculating variety of the rational normal curve in $\RP^k$.

It is clear that $\gamma$ can be approximated by $\gamma_0$ in the flag variety up to the first order. The symmetry algebra $\sg$ of $\gamma_0$ is isomorphic to $\sll(2,\R)$ embedded irreducibly into $\sll(k+1,\R)$. As $\sg$ is reductive, the restriction of the Killing form to $\sg$ is non-degenerate, and we can choose
$W$ as the set of all $y$-invariant elements of the orthogonal complement to $\sg$ with respect to the Killing form $K(A,B)=\operatorname{tr}(AB)$. Simple computation shows that $W = \langle y^i \mid i = 2, \dots, k\rangle$.

Thus, for any non-degenerate curve $\gamma$ in $\RP^k$ we can construct the canonical frame bundle $E\to \gamma$ such that $\om_\sg$ takes values in $\sll(2,\R)$ and defines a (flat) projective connection on it, while the 1-form $\om_W$ is equal to:
\begin{equation}\label{eq:omw}
\om_W = \sum_{i=2}^k \theta_{i+1}y^i.
\end{equation}
Here $\theta_i$ are the equivariant $1$-forms on the bundle $E$ defining a set of fundamental invariants of the curve $\gamma$.

These invariants can be constructed explicitly as follows. Let as choose an arbitrary local parameter $t$ on $\gamma$ and a curve $v_0(t)$ in $\R^{k+1}$ such that $\gamma(t) = \langle v_0(t) \rangle$ for all $t$. Define $v_i(t)=v_0^{(i)}(t)$. The osculating flag of $\gamma$ is given by subspaces $\gamma^{(i)} = \langle v_0(t), \dots, v_i(t) \rangle$ for all $0\le i \le k$. By regularity condition the family of vectors $\{ v_0(t), v_1(t),\dots, v_k(t) \}$ forms a basis in $\R^{k+1}$ for all $t$. In particular, this defines a section $s\colon \gamma \to GL(k+1,\R)$. Express the derivative of $v_k(t)$ in this basis:
\[
v_k'(t) = v_0^{(k+1)}(t) = p_0(t) v_0(t) + \dots + p_k(t) v_k(t).
\]
The pull-back of the Maurer-Cartan form $\om$ to the curve $\gamma$ has the form:
\begin{equation}\label{frame_pk}
s^*\omega =
\begin{pmatrix}
0 & 0 & \dots & 0 & p_0(t) \\
1 & 0 & \dots & 0 & p_1(t) \\
\vdots & \ddots & \vdots & \vdots \\
0 & 0 & \dots & 1 & p_k(t)
\end{pmatrix}.
\end{equation}
We see that the section $s$ takes values in $SL(k+1,\R)$ if and only if $\tr s^*\omega = p_k(t) = 0$. Multiplying the vector $v_0(t)$ by the function $\lambda(t)$, where $\lambda' = (k+1) p_k(t)\lambda$, we can always make the coefficient $p_k(t)$ vanish identically. Moreover, this defines the vector $v_0(t)$ uniquely up to the non-zero constant multiplier. Thus, for every choice of a local parameter $t$ we get a well-defined section $s\colon \gamma \to SL(k+1,\R)$. Moreover, from~\eqref{frame_pk} we see that these sections take values already in the zero-order reduction $E_0$ in terms of Subsection~\ref{ss:zero}.

In order to construct the section further reductions $E_i$, we can adjust the section $s$ by recalibrating it to $s\cdot A(t)$, where $A\colon \gamma \to S^{(0)}$ is any smooth function. It is easy to see that the connected component of the group $S^{(0)}$ has the form:
\[
S^{(0)} = \{\diag(a^k,a^{k-2},\dots,a^{-k})\mid a\in \R^*\} N(k+1,\R),
\]
where $N(k+1,\R)$ is the subgroup of all unipotent matrices in $SL(k+1,\R)$. The first multiplier is a subgroup in $S_0$, the symmetry group of the rational normal curve $\gamma_0$. So, we can consider only functions $h$ taking values in $N(k+1,\R)$. Let $A(t) = (A_{ij}(t))_{0\le i,j \le k}$, where $A_{ii} = 1$, $i=0,\dots, k$ and $h_{ij} = 0 $ for $i>j$. Then the condition that $sA(t)$ takes values in the canonical frame is equivalent to the condition that $A^{-1} (s^*\omega) A + A^{-1}A'$ is of the form:
\[
x + \sum_{i=2}^{k} \theta_{i+1}(t) y^i \mod \langle h, y\rangle.
\]

The normalization procedure of Section~\ref{sec:3} guarantees that such matrix $A(t)$ can always be found. Moreover, each reduction step involves only solving \emph{linear} equations for a part of the functions $A_{ij}$. In particular, the invariants $\theta_i(t)$ are polynomial functions of the coefficients $p_i(t)$, $i=0,\dots,k$ and their derivatives. For example, the invariant $\theta_3$ is explicitly given as:
\[
\theta_3 = p_{k}''-\frac6{k+1}p_kp_k'+\frac4{(k+1)^2}p_k^3+\frac{6}{k}p_{k-1}'
+\frac{12}{k(k+1)}p_kp_{k-1}+\frac{12}{k(k-1)}p_{k-2}.
\]

Invariants $\theta_i(t)$ were first constructed in the classical book by Wilczynski~\cite{wilch} in 1905 using the correspondence between the non-degenerate curves in $\RP^k$ and linear homogeneous differential equations of order $k+1$. See also~\cite{ovs-tab,se-ashi} for the definition of these invariants in the modern language. In the following we shall call them \emph{Wilczynski invariants} of the projective curve. As an easy consequence of our theory we immediately see that a curve $\gamma$ is an open part of a rational normal curve if and only if all its Wilczynski invariants vanish identically.

In the simplest case of curves in on the projective plane $P^2$ we recover this way the classical construction of \'Elie Cartan~\cite{cartan-proj} of the projective parameter and the projective invariants of plane curves considered up to projective transformations.

\subsection{Curves in Grassmann and flag varieties}
Let $\gamma$ be a curve in Grassmann variety $\Gr_r(V)$. In general, the tangent line $T_t\gamma$ for any $t\in\gamma$ can be identified with a one-dimensional subspace in $\Hom(\gamma(t),V/\gamma(t))$. Take any element $\phi_t$ in this line and denote by $\gamma'$ the subspace $\gamma(t) + \operatorname{Im}\phi_t$. We define $k$-th derivative $\gamma^{(k)}$ in a similar manner. Thus, for each $t\in \gamma$ we get a flag:
\[
  0\subset \gamma(t) \subset \gamma'(t) \subset \dots \subset V,
\]
which is called the osculating flag of $\gamma$. Assume that $\dim \gamma^{(i)}(t)=r_i$ for all $t\in\gamma$. Then the local geometry of $\gamma$ is equivalent to the local geometry of the corresponding curve of osculating flags. It is easy to see that this curve is an integral curve in the flag variety $F_{r_0,r_1,\dots,r_k}(V)$, the flag variety of the group $G=SL(V)$.

More generally, if
\[
0 \subset V_1(t)\subset V_2(t) \subset \dots \subset V_k(t)\subset V
\]
is any curve in the flag variety $F_{r_1,\dots,r_k}$, $r_i=\dim V_i(t)$, then it is an integral curve if and
only if it satisfies $V_i'(t)\subset V_{i+1}(t)$ for all $i=1,\dots,k-1$. Such curves appear naturally in the geometry of system of differential equations of mixed order (i.e. with in general non-constant highest order of derivatives for different unknown functions) via the linearization procedure \cite{quasi, mix1}.

Consider now the simplest example when the symmetry algebra $\sg$ of the flat curve is not reductive in $\g$. Let $\gamma$ be a curve in Grassmann variety $\Gr_2(\R^5)$. We assume that $\gamma$ satisfies two additional non-degeneracy conditions:
\[
\dim\gamma'(x)=4,\ \dim\gamma''(x)=5\quad\text{for all }x\in\gamma.
\]
These conditions mean that $\gamma$ can be lifted to the curve (we also denote it by $\gamma$) in the flag variety $F_{2,4}(\R^5)$, that can be approximated by the flat curve $\gamma_0$ corresponding to the following element $x\in \sll(5,\R)$:
\[
x = \begin{pmatrix} 0 & 0 & 0 & 0 & 0 \\ 0 & 0 & 0 & 0 & 0 \\ 1 & 0 & 0 & 0 & 0 \\
0 & 1 & 0 & 0 & 0 \\ 0 & 0 & 0 & 1 & 0 \end{pmatrix}.
\]
Direct computation shows that the subalgebra $\sg$ has the form:
\[
\left\{
\begin{spmatrix}
b & 0 & c & 0 & 0 \\ 0 & 2b & 0 & 2c & 0 \\ a & 0 & -b & 0 & 0 \\
0 & a & 0 & 0 & 2c \\ 0 & 0 & 0 & a & -2b
\end{spmatrix}
+
\begin{spmatrix} 3d & 0 & 0 & 0 & 0 \\ 0 & -2d & 0 & 0 & 0 \\ 0 & 0 & 3d & 0 & 0 \\
0 & 0 & 0 & -2d & 0 \\ 0 & 0 & 0 & 0 & -2d \end{spmatrix}
+
\begin{spmatrix} 0 & e_0 & 0 & e_1 & 0 \\ 0 & 0 & 0 & 0 & 0 \\ 0 & 0 & 0 & e_0 & 2e_1 \\
0 & 0 & 0 & 0 & 0 \\ 0 & 0 & 0 & 0 & 0 \end{spmatrix}\right\}.
\]
Here the first summand is an $\sll_2(\R)$-subalgebra containing an element $x$, the second summand is its centralizer and the third summand is the nilradical of $\sg$. Note that this is a particular case of Theorem 8.1  and Remark 8.2 of~\cite{flags} and it is based on the theory of $\sll_2$ representations.

Let us show that there is no $\sg^{(0)}$-invariant subspace $W\subset\pg$ complementary to $\sg^{(0)} + [x,\pg]$. Indeed, it is clear that such subspace $W$ should satisfy $W_2=\g_2$ and $\dim W_1=2$. Decomposing $\g_1$ into the direct sum of eigenspaces with respect to the action of the 2-dimensional diagonalizable subalgebra in $\sg_0$, we see that $W_1$ would necessarily be spanned by the following two elements:
\[
w_1 = \begin{spmatrix} 0 & 0 & 0 & 0 & 0 \\ 0 & 0 & 1 & 0 & 0 \\ 0 & 0 & 0 & 0 & 0 \\
0 & 0 & 0 & 0 & 0 \\ 0 & 0 & 0 & 0 & 0 \end{spmatrix},\quad
w_2 = \begin{spmatrix} 0 & 0 & \alpha_1 & 0 & 0 \\ 0 & 0 & 0 & \alpha_2 & 0 \\ 0 & 0 & 0 & 0 & 0 \\
0 & 0 & 0 & 0 & \alpha_3 \\ 0 & 0 & 0 & 0 & 0 \end{spmatrix}.
\]
However, the action of $\sg_0$ on these 2 elements generates at least 3-dimensional space in degree 1.

Thus, in general, we can not equip a non-degenerate curve in $\Gr_2(\R^5)$ with a natural Cartan connection as it was done for non-degenerate curves in projective spaces. However, by choosing a non-invariant complementary subspace $W$ we still can build a natural frame bundle along $\gamma$ satisfying the conditions of Theorem~\ref{thm1}.

\subsection{Curves of isotropic and coisotropic subspaces}
Let $V$ be a vector space equipped with a non-degenerate symmetric or antisymmetric form $b$. Denote by $\sll(V,b)$ the subalgebra in $\sll(V)$ preserving $b$. It is either $\mathfrak{sp}(V)$ or $\mathfrak{so}(V)$, but we prefer to use the unifying notation.

We shall denote by $\Gr_r(V,b)$ the variety of all $r$-dimensional isotropic subspaces in $V$. More generally, denote by $F_{r_1,r_2,\dots,r_k}(V,b)$ the variety of flags of isotropic subspaces $V_1\subset V_2 \subset \dots \subset V_k$ of dimensions $r_1,r_2,\dots, r_k$.

We shall consider curves $\gamma$ in $F_{r_1,r_2,\dots,r_k}(V,b)$ satisfying the following additional conditions:
\begin{align}
V_i' & \subset V_{i+1}, i =1,\dots,k-1;\\
V_k' & \subset V_{k-1}^\perp,\quad \text{if $V_k$ is Lagrangian};\\
V_k' & \subset V_{k}^\perp,\quad \text{if $V_k$ is not Lagrangian}.
\end{align}
Here by $V_i$ we mean the curve in $\Gr_{r_i}(V)$ defined by $\gamma$, and by $V_i'$ its derivative defined above.

In fact, these are exactly the types of curves that can be approximated up to first order by a flat curves $\gamma_0$ corresponding to elements $x$ of degree $-1$ of $\sll(V,b)$ equipped with an appropriate grading. In the case of $\mathfrak{sp}(V)$ such curves appear in the geometry of nonholonomic distributions \cite{douzel2,douzel3} and more general structures coming from the Control Theory \cite{agrzel2, agrzel3, zel-che, quasi}.

It is easy to check that $\sg$ is reductive if $r_i = is$ and $r_k = ks = \dim V/2$ for an appropriate $s$. In particular, we can find an $S^{(0)}$-invariant subspace $W$ in all these cases and equip the curve $\gamma$ with a Cartan connection (and a projective parameter) in a natural way.

In~\cite{flags} we provide the complete classification of all possible curve types and compute their symmetry algebras for all flag varieties of classical simple Lie algebras.

\subsection{$G_2$ examples}
\label{g2-ex}

Let $G$ be a split real Lie group of type $G_2$ and $\g$ be the corresponding Lie algebra. Up to conjugation there are three different connected parabolic subgroups of $G$, which can be easily described using the realization of $G$ as an automorphism group of split octonions. Let $\Oct$ be the algebra of split octonions as defined in~\cite[\S19.3]{humph}, and let $V$ be the 7-dimensional subspace of all imaginary octonions in $\Oct$. The multiplication of imaginary octonions split into scalar and imaginary parts defines the non-degenerate symmetric bilinear form $B$ on $V$ and a skew-symmetric bilinear map $\Omega\colon \wedge^2 V \to V$. Identifying $V$ with $V^*$ by means of $B$, we can identify $\Omega$ with an element of $\wedge^2 V^*\otimes V^*$, which turns to be totally antisymmetric. Thus, the multiplication of $\Oct$ defines a 3-form $\Omega\in \wedge^3 V^*$. In fact, it is also well-known that the form $\Omega$ defines $B$ uniquely up to a constant via the following identity:
\[
i_{v_1}\Omega \wedge i_{v_2}\Omega \wedge \Omega = B(v_1,v_2)\operatorname{vol},
\]
where $\operatorname{vol}$ is a volume form on $V$.

We identify the Lie group $G$ of type $G_2$ with the automorphism group of $\Oct$. It preserves $V$, the symmetric form $B$ and the 3-form $\Omega$ on $V$. We denote by $\Delta$ the root system of type $G_2$, which has the following Dynkin diagram:
\[
\cir[,\alpha_1]\lllar\cir[,\alpha_2]
\]
and the set of all positive roots:
\[
\alpha_1, \alpha_2, \alpha_1+\alpha_2, \alpha_1+2\alpha_2,\alpha_1+3\alpha_2,2\alpha_1+3\alpha_2.
\]
We denote the basis of $\g$ by $\{H_1,H_2,X_\alpha : \alpha\in\Delta\}$, where $\{H_1,H_2\}$ is the basis
of the Cartan subalgebra of $\g$ dual to the basis $\{\alpha_1,\alpha_2\}$ of the root system $\Delta$ and
$X_{\alpha}$ is any non-zero element in $\g_{\alpha}$ for $\alpha\in\Delta$. We shall not need the
exact values of the structure constants of $\g$ in this basis.

The space $V$ of imaginary octonions in $\Oct$ is 7-dimensional, is preserved by $G$ and is a minimal non-trivial representation of $G$. We say that a subspace $W$ is an \emph{null subalgebra} in $V$, if the product of any two elements in $W$ is equal to $0$. It is easy to see that all one-dimensional null subalgebras are exactly the subalgebras of the form $\langle c\rangle$, where $c$ is an isotropic vector in $V$ with respect to the symmetric form $B$ (or, equivalently, $i_c\Omega\wedge\i_c\Omega\wedge \Omega = 0$). It is known that $G$ acts transitively on the set of all such vectors.

Fix an isotropic vector $c\in V$ and consider its annihilator $A(c) = \{x\in V \mid xc = 0\}$. It is known to be a $3$-dimensional subalgebra of $\Oct$, which lies in $V$ and is no longer null. In terms of the 3-form $\Omega$ it can be described as:
\[
A(c) = \ker i_c\Omega = \{ v\in V \mid \Omega(c,v,\cdot)=0 \}.
\]

In fact, we can always find an element $g_c\in G$ such that $g_c.c=c_3$, so that $g_c.A(c)$ will become the subalgebra with the basis $\langle c_3, c_7, c_8\rangle$ and the multiplication $c_7c_8=-c_8c_7 =c_3$, $c_3^2=c_7^2=c_8^2=0$. (Here we use the notation $\{c_1,\dots, c_8\}$ for the basis in $\Oct$ as given in~\cite{humph}.) Thus, we see that there are no 3-dimensional null subalgebras in $V$, and any isotropic element $c$ is contained in a one-parameter family of $2$-dimensional null subalgebras of the form
$g_c^{-1}.\langle c_3, \lambda_7 c_7 + \lambda_8 c_8\rangle$, $[\lambda_1:\lambda_2]\in\RP^1$.

It is easy to see that $G$ acts transitively on the set of all flags $V_1\subset V_2$, where $V_1$ and $V_2$ are both null subalgebras of dimension $1$ and $2$ respectively. Fix any of such flags. Than its stabilizer under the action of $G$ is exactly the Borel subgroup of $G$, while the stabilizers of $V_1$ and $V_2$ are two different parabolic subgroups $P_1$ and $P_2$ of $G$.

Both homogeneous spaces $M_i=G/P_i$, $i=1,2$ are 5-dimensional, while $G/B$ is 6-dimensional. Let us describe the distribution $D$ and the notion of integral curves of constant type in each of these cases.

Let us start with the homogeneous space $G/B$. First, note that any flag of null subalgebras $V_1\subset V_2$ is naturally extended to the complete flag in $V$:
\[
0\subset V_1\subset V_2 \subset V_3 = A(V_1) \subset V_4 = V_3^{\perp} \subset V_5 = V_2^{\perp}\subset V_6 = V_1^{\perp} \subset V.
\]
As $\g_{-1} = \langle X_{-\alpha_1}, X_{-\alpha_2} \rangle$, we see that $\dim D=2$, and $D = \ker d\pi_1 \oplus \ker  d\pi_2$, where $\pi_i \colon G/B \to G/P_i$, $i=1,2$ are natural projections. This implies that a curve $\gamma$ of flags of null subalgebras:
\begin{equation}\label{curveB}
0 \subset V_1(t)\subset V_2(t) \subset V
\end{equation}
 is an integrable curve of $D$ if and only if it satisfies the following conditions:
\begin{equation}\label{intcurve}
V_1' \subset V_2,\quad V_2' \subset A(V_1).
\end{equation}
 It is easy to see that the action of $G$ on $PD$ has exactly 3 orbits: two are the kernels of $d\pi_i$, $i=1,2$ and one open orbit is complementary to the first two. So, all integral curves of constant type consist of the following:
\begin{enumerate}
\item fibers of the projection $\pi_1\colon G/B\to G/P_1$, or, in other words, curves~\eqref{curveB} with constant $V_1(t)$;
\item  fibers of the projection $\pi_2\colon G/B\to G/P_2$, or, in other words, curves~\eqref{curveB} with constant $V_2(t)$;
\item \emph{non-degenerate} curves~\eqref{curveB}, satisfying $V_1'=V_2$ and $V_2'=A(V_1)$.
\end{enumerate}
Elementary calculations show that a non-degenerate curve additionally satisfies: $V_1'''=V_3'=V_3^{\perp}$, $V_1^{(4)}=V_2^\perp$, $V_1^{(5)}=V_1^\perp$ and $V_1^{(6)}=V$. Thus, the complete osculating flag of the curve $V_1(t)$, considered as a projective curve in $PV$, is completely determined by $V_1$ and $V_2(t)=V_1'$ via algebraic operations only.

Let us now describe how the developed theory applies to the non-degenerate integral curves in $G/B$. In this case we can take $x=X_{-\alpha_1}+X_{-\alpha_2}$. Simple computation shows that
\begin{itemize}
\item the subalgebra $\sg$ is isomorphic to $\sll(2,\R)$ with the basis $x$, $h=H_1+H_2$ and $y=X_{\alpha_1}+X_{\alpha_2}$;
\item the space $(\sg+[x,\pg])\cap \pg$ coincides with $\sum_{i=1}^4 \g_i$;
\item the subspace $W=\g_5$ is $\sg^{(0)}$-invariant and is complementary to $\sg+[x,\pg]$.
\end{itemize}
Hence, applying Theorem~\ref{thm1} in this case, we see that we can canonically associate a projective connection and a W-valued fundamental invariant to each non-degenerate integral curve $\gamma$ on $G/B$. The projective connection defines a canonical projective parametrization $\tau$ on the curve $\gamma$, while fundamental invariant can be written as $f(\tau)(d\tau)^6$ and completely determines the equivalence class of $\gamma$.

Next, consider curves $\gamma$ in the homogeneous space $G/P_1$ of all $1$-dimensional null subalgebras in $V$. We shall write $\gamma=V_1(t)$. The distribution $D$ is 2-dimensional in this case and can be written as $D_{V_1} = A(V_1)/V_1$ for each point $V_1\in G/P_1$. Thus, we see that a curve $V_1(t)$ is integrable if and only if it satisfies $V_1'\subset A(V_1)$. We assume that $V_1(t)$  does not degenerate to a point. Then $V_2(t) = V_1'(t)$ is a 2-dimensional subspace in $A(V_1(t))$ for each $t\in\gamma$. As it also contains $V_1(t)$, we see that $V_2(t)$ is necessarily an null subalgebra. Simple computations show that $V_2'\subset A(V_1)$, and, hence, the curve $\gamma\subset G/P_1$ is naturally lifted to $G/B$. If, moreover, $V_2'=A(V_1)$, then the lifted curve is non-degenerate, and we can apply the above constructions to equip $\gamma$ with a canonical projective invariant and a single fundamental invariant.

Finally, consider curves $\gamma$ in the homogeneous space $G/P_2$ of all $2$-dimensional null subalgebras in $V$. In this case $\g_0\equiv\gl(2,\R)$, and the $\g_0$-module $\g_{-1}$ is equivalent to $S^3(\R^2)$. In particular, $\dim D = 4$, and for each $V_2\in G/P_2$ the space $D_{V_2}$ can be identified with a 4-dimensional subspace in $\Hom(V_2, V_2^{\perp}/V_2)$.  A curve $V_2(t)$ in $G/P_2$ is integral if and only if $V_2'\subset V_2^\perp$.

The action of $G$ on $PD$ has 4 different orbits corresponding to the orbits of $GL(2,\R)$ action on $S^3(\R^2)$, that is on the space of cubic polynomials:
\begin{enumerate}
\item polynomials with triple root;
\item polynomials with one simple and one double root;
\item polynomials with one real and 2 complex roots;
\item polynomials with 3 different real roots.
\end{enumerate}
We shall call integral curves, whose tangent lines belong to the first orbit \emph{special} curves. Let us describe them in more detail. Namely, for each null subalgebra $V_2\subset V$ we define the embedding of $PV_2$ into $PD_{V_2}$ as follows:
\[
PV_2\to PD_{V_2},\quad V_2 \supset V_1 \mapsto (V_2/V_1)^* \otimes A(V_1)/V_2.
\]

This embedding defines a field of rational normal curves in $PD$. This cone is exactly the orbit~(1) of the action of $G$ on $PD$. In particular, we see that a curve $\gamma=V_2(t)$ is special, if and only if $V_2'$ is 3-dimensional. In this case it is equal to $A(V_1(t))$, where $V_1 \subset V_2(t)$ is defined uniquely by the condition $V_1'\subset V_2$. Thus, as in the case of curves in $G/P_1$, any special curve in $G/P_2$ is naturally lifted to the curve in $G/B$. If, in addition, we suppose that $V_1'=V_2$, then this lift is a non-degenerate curve, and we can naturally equip $\gamma$ with canonical projective parameter and a single fundamental differential invariant.

Consider now the case, when $\gamma\subset G/P_2$ is an integral curve of constant type, and its tangent lines belong to the orbit~(2) above. Then we can no longer lift this curve to an integral curve on $G/B$ and have to deal with it on the homogeneous space $G/P_2$ itself. We can take the corresponding element $x\in\g_{-1}$ as $X_{-\alpha_1-\alpha_2}$. Direct computation shows that:
\begin{itemize}
\item the subalgebra $\sg$ is isomorphic to $\sll(2,\R)+\st(2,\R)$ with the basis $x$, $H_1, H_2$, $X_{\alpha_1+\alpha_2}$ and $X_{\alpha_1+3\alpha_2}$ (here by $\st(2,\R)$ we denote the 2-dimensional subalgebra of $\sll(2,\R)$ consisting of upper-triangular matrices);
\item the space $(\sg+[x,\pg])\cap \pg$ has codimension 2 in $\pg$ and is concentrated in degrees $0$ and $1$;
\item the subspace $W=\langle X_{\alpha_1}, X_{2\alpha_1+3\alpha_2}\rangle$ is $\sg^{(0)}$-invariant and is complementary to $\sg+[x,\pg]$.
\end{itemize}
Thus, we see that in this case we get a natural projective connection and a $W$-valued fundamental invariant on $\gamma$. Note that the $\sg^{(0)}$-invariant subspace $W$ exists in this case, even though the subalgebra $\sg$ is not reductive in $\g$.

Assume now that $\gamma\subset G/P_2$ is an integral curve of constant type, and its tangent lines belong to the orbit~(3) above. We can take the corresponding element $x\in\g_{-1}$ as $X_{-\alpha_1}+X_{-\alpha_1-3\alpha_2}$. Direct computation shows that:
\begin{itemize}
\item the subalgebra $\sg$ is isomorphic to $\sll(2,\R)$;
\item the space $(\sg+[x,\pg])\cap \pg$ has codimension 3 in $\pg$ and is concentrated in degrees $0$ and $1$;
\item the subspace $W=\langle X_{2\alpha_1+3\alpha_2}, X_{\alpha_1+\alpha_2}, X_{\alpha_1+2 \alpha_2}\rangle$ is $\sg^{(0)}$-invariant and is complementary to $\sg+[x,\pg]$.
\end{itemize}
Thus, we see that we can again associate a natural projective connection on $\gamma$ and 3 scalar fundamental invariants on top of it.

The case of the orbit of type~(4) can be considered in the same way and leads to the same results, as in case of orbit~(3).

\section{Conformal, symplectic and $G_2$-structures on the solution space of an ODE}
\label{sec:app}

\subsection{Compatibility of projective curves with symmetric and skew-symmetric bilinear forms}
As one of the applications of the theory we find the explicit conditions when the solution space of an ODE of order $k\ge 3$ carries a natural symplectic, conformal, or a $G_2$-structure.

As in Subsection~\ref{ssec:41} consider a non-degenerate curve $\gamma$ in $\RP^k$. Its dual curve $\gamma^* \subset \RP^{k,*}$ is defined as a curve of osculating hyperplanes $\gamma^{(k-2)}$. The curve $\gamma$ is called self-dual, if there is a projective transformation from $\RP^k$ to $\RP^{k,*}$ that maps $\gamma$ to $\gamma^*$. Wilczynski~\cite{wilch} proved that a curve is self-dual if and only if its Wilczynski invariants $\theta_{2i+1}$, $i=1,\dots,[k/2]$ vanish identically.

Let as give a simple proof of this result using the developed techniques as well as establish when a projective curve $\gamma$ in $\RP^6$ defines a unique $G_2$ subalgebra in $\gl(7,\R)$.

Let $V$ be an arbitrary finite-dimensional vector space of dimension $k+1$ and let $P(V)$ denote its projectivization. We say that a non-degenerate curve $\gamma\in P(V)$ is compatible with a non-degenerate bilinear form $b\colon V\times V\to \R$ if:
\begin{equation}\label{b}
b(\gamma^{(i)},\gamma^{(k-2-i)}) = 0\quad\text{for all }i=0,\dots,k-2.
\end{equation}

Let us show that the existence of such $b$ is equivalent to the above definition of self-duality. Indeed, if a curve $\gamma$ is compatible with a bilinear form $b$, then this form $b$ considered as a map from $V$ to $V^*$ defines a projective transformation that maps $\gamma$ to its dual curve $\gamma^{(k-2)}$.

Similarly, any projective transformation from $P(V)$ to $P(V^*)$ mapping $\gamma$ to $\gamma^{(k-2)}$ defines a bilinear from on $V$ such that $b(\gamma,\gamma^{(k-2)})=0$. This also implies that $b(\gamma,\gamma^{(k-3)})=0$. Differentiating this equality, we immediately get $b(\gamma',\gamma^{(k-3)})=0$. Proceeding in the same way we prove that the curve $\gamma$ satisfies~\eqref{b} .

\begin{lem}\label{lem:dual}
A non-degenerate curve $\gamma\subset P(V)$ is compatible with a non-degenerate symmetric ($\dim V$ is odd) or skew-symmetric form ($\dim V$ is even) if and only if all Wilczynski invariants $\theta_{2i+1}$, $i=1,\dots, [(\dim V-1)/2]$ of $\gamma$ vanish identically.
\end{lem}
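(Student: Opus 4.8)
The plan is to read the statement off the structure of the canonical frame constructed in Subsection~\ref{ssec:41}. Recall that there $\sg\cong\sll(2,\R)$ acts irreducibly on $V$, so $V$ is the irreducible $\sll_2$-module $S^{\dim V-1}(\R^2)$, and that on the canonical frame bundle $E\to\gamma$ the Maurer--Cartan form decomposes as $\om|_E=\om_\sg+\om_W$ with $\om_\sg$ valued in $\sll(2,\R)$ and $\om_W=\sum_{i=2}^{k}\theta_{i+1}y^i$, where $k=\dim V-1$ and the $\theta_{i+1}$ are the Wilczynski invariants. The whole argument rests on one representation-theoretic observation: if $\Phi$ denotes the non-degenerate $\sll_2$-invariant bilinear form on $V$ (unique up to scale), then $\Phi$ is symmetric exactly when $k$ is even (i.e.\ $\dim V$ odd) and skew-symmetric exactly when $k$ is odd (i.e.\ $\dim V$ even); moreover, since $\Phi$ pairs the $y$-eigenspaces of opposite weight, its matrix in the weight basis $\{v_i\}$ is supported precisely on the anti-diagonal $i+j=k$.

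First I would prove the reverse implication, which is the substantive one. Assume all odd Wilczynski invariants $\theta_{2j+1}$ vanish. Since $y\in\sll(2,\R)$, it is skew with respect to $\Phi$, whence $\Phi(y^iu,v)=(-1)^i\Phi(u,y^iv)$ for all $u,v$; thus $y^i$ lies in the isometry subalgebra $\sll(V,\Phi)$ if and only if $i$ is odd. In $\om_W$ the coefficient of $y^i$ is $\theta_{i+1}$, so the terms with $i$ even are exactly those carrying the odd-indexed invariants $\theta_3,\theta_5,\dots$. Because $\om_\sg$ always lies in $\sll(V,\Phi)$, the vanishing of all odd Wilczynski invariants is therefore equivalent to $\om|_E$ taking values in $\sll(V,\Phi)$. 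Fixing a local section $s\colon\gamma\to E$ and using $s^*\om=s^{-1}ds$, this says that $s^\top\mathcal B s$ is constant, where $\mathcal B=(s^{-1})^\top\Phi\,s^{-1}$; equivalently $\mathcal B$ is a constant bilinear form on $V$. This $\mathcal B$ is non-degenerate (its determinant differs from that of $\Phi$ by the nonzero factor $(\det s)^{-2}$), has the same symmetry type as $\Phi$, and, since $\Phi$ is supported on $i+j=k$, its frame matrix $\mathcal B(v_i,v_j)$ vanishes whenever $i+j<k$; in particular $\mathcal B(\gamma^{(i)},\gamma^{(k-2-i)})=0$, which is exactly the compatibility condition~\eqref{b}. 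As the structure group $S^{(0)}\subset S\cong SL(2,\R)$ preserves $\Phi$, the form $\mathcal B$ is independent of the section and globally defined along $\gamma$. Hence $\gamma$ is compatible with a non-degenerate form of the asserted symmetry type.

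For the forward implication I would simply invoke the equivalences already recorded before the statement: compatibility of $\gamma$ with any non-degenerate bilinear form is equivalent to self-duality, and by Wilczynski's theorem self-duality is equivalent to the vanishing of all odd Wilczynski invariants. A compatible symmetric or skew-symmetric form is in particular a compatible non-degenerate form, so its existence forces $\theta_{2j+1}\equiv0$. One should resist running the frame computation backwards here: a compatible form of the correct symmetry type need not have frame matrix proportional to $\Phi$ (already the twisted cubic admits compatible forms of \emph{both} types), so the clean route for this direction is through the established self-duality criterion rather than through $\sll(V,\Phi)$.

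The step I expect to require the most care is the bookkeeping of parities: aligning ``$y^i$ preserves $\Phi$ iff $i$ is odd'' with ``the odd Wilczynski invariants are the coefficients of the even powers $y^i$'' and with ``$\Phi$ is symmetric iff $k$ is even'', so that the three parity statements combine to give the symmetric/odd-dimensional and skew/even-dimensional dichotomy exactly as stated. Everything else is either the cited classical input or the short linear-algebra verification that the anti-diagonal support of $\Phi$ yields~\eqref{b}.
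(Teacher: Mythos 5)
Your reverse implication is correct and is essentially the paper's own argument, with the useful extra step of exhibiting the compatible form explicitly as $(s^{-1})^{\top}\Phi\,s^{-1}$ and checking that it is constant, independent of the section, and supported on the anti-diagonal so that~\eqref{b} holds. The parity bookkeeping ($y^i\in\sll(V,\Phi)$ iff $i$ is odd; $\Phi$ symmetric iff $\dim V$ is odd) agrees with the paper's.

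The forward implication, however, has a genuine gap. You discharge it by citing Wilczynski's classical theorem that self-duality is equivalent to the vanishing of the odd invariants; but this lemma \emph{is} the paper's proof of that theorem (the text introducing it says explicitly that it will give ``a simple proof of this result using the developed techniques''), so the appeal is circular relative to what is being established. Even taken as an external citation, it silently uses the identification of the frame-theoretic invariants $\theta_{2i+1}$ of Subsection~\ref{ssec:41} with Wilczynski's classical invariants defined via the Laguerre--Forsyth form of the associated linear ODE --- an identification the paper asserts historically but does not prove, and which your argument would need. The paper's forward argument avoids all of this and is the idea you are missing: if $\gamma$ is compatible with $b$ of the stated symmetry type, then $\gamma$ is a curve of isotropic flags, i.e.\ a curve in the flag variety of $SL(V,b)$; since the Killing form of $\sll(V,b)$ is proportional to the restriction of that of $\sll(V)$, formula~\eqref{eq:w} applied inside $\sll(V,b)$ yields $\widetilde W=W\cap\sll(V,b)$, and a $\widetilde W$-normal subspace of $\sll(V,b)$ is automatically $W$-normal in $\sll(V)$. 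By the uniqueness in Theorem~\ref{thm1}, the canonical frame built inside $SL(V,b)$ coincides with the canonical frame built inside $SL(V)$, so $\om_W$ takes values in $W\cap\sll(V,b)=\langle y^i \mid i\ \text{odd}\rangle$ and the coefficients $\theta_{2i+1}$ of the even powers of $y$ vanish. Your observation that the twisted cubic admits compatible forms of both symmetry types is a legitimate reason for caution, but the correct resolution is this frame-uniqueness argument, not the classical theorem.
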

\begin{proof}
The condition~\eqref{b} can be interpreted as saying that the flag
\[
0\subset\gamma\subset \gamma' \subset \dots \subset \gamma^{([\frac{k-1}{2}])}
\]
is isotropic with respect to $b$ and the spaces $\gamma^{(i)}$ with $i>[\frac{k-1}{2}]$ are exactly the orthogonal complements to $\gamma^{k-2-i}$ with respect to $b$.

Denote by $SL(V,b)$ the subgroup of $SL(V)$ preserving the form $b$. That is $SL(V,b)$ is either the group of orthogonal or symplectic matrices depending on the symmetry index of $b$. It is well-known that the group $SL(V,b)$ acts transitively on flags of isotropic subspaces. Thus, if the curve is compatible with a bilinear form $b$, then we can consider it as a curve in the flag variety of the group $SL(V,b)$. Denote by $\sll(V,b)$ the subalgebra of $\sll(V)$ corresponding to $SL(V,b)$.

Let $P\subset SL(V,b)$ be the canonical moving frame for this curve corresponding to the subspace $\widetilde W\subset \sll(V,b)$ defined by~\eqref{eq:w}. Note that the Killing form of $\sll(V,b)$ coincides up to a constant with the restriction of the Killing form of $\sll(V)$. Therefore, the same formula~\eqref{eq:w} in case of $\sll(V)$ defines the subspace $W$ containing $\widetilde W$. Hence, the moving frame $P$ is at the same time the canonical frame of $\gamma$ viewed as a curve in the corresponding (larger) flag variety of the group $SL(V)$.

Recall (see Subsection~\ref{ssec:41}) that the space $W$ is spanned by $y^i$, $i=2,\dots,k$, where $\{x,h,y\}$ is the standard basis of $\sll(2,\R)$ embedded irreducibly into $\sll(V)$. Decomposing $\sll(2,\R)$-module $\sll(V,b)$ into the sum of irreducible representations, we see that $y^i$ lies in $\sll(V,b)$ if and only if $i$ is odd. Then equation~\eqref{eq:omw} implies that the canonical moving frame for the curve $\gamma$ takes values in $\sll(V,b)$ if and only if all Wilczynski invariants $\theta_{2i+1}$ vanish identically.
\end{proof}

\subsection{Compatibility of projective curves with $G_2$-structures}
As in Subsection~\ref{g2-ex}, let $V\subset \Oct$ be the 7-dimensional vector space of imaginary split octonions, $G_2$ a subgroup of $GL(V)$ and let $\Omega\in \wedge^3 V^*$ be the $G_2$-invariant 3-form on $V$ that corresponds to the imaginary part of multiplication of imaginary octonions. In fact, $G_2$ coincides with the stabilizer of $\Omega$ under the natural action of $GL(V)$ on $\wedge^3 V^*$.

It is well-known that the action of $GL(7,\R)$ on the space of all $3$-forms has 2 open orbits. One of these orbits has a compact form of $G_2$ as a stabilizer (it corresponds to the multiplication tensor of the imaginary octonions) and another has a split real form of $G_2$ as a stabilizer (it corresponds to the multiplication tensor of split octonions). 

We shall call a 3-form on $\R^7$ \emph{a $G_2$-structure} if it lies in the orbit of $\Omega$ under the standard action of $GL(V)$ on $\wedge^3 V^*$, or, what is the same, if its stabilizer is conjugate to the split real form of $G_2$  in $GL(V)$. Abusing notation, we shall denote $G_2$-structures by the same letter $\Omega$.

We say that a non-degenerate curve $\gamma\subset P^6=P(V)$ is compatible with a $G_2$-structure $\Omega$, if
\[
\Omega(\gamma, \gamma'', \cdot) = 0.
\]
In terms of Subsection~\ref{g2-ex} this means that $\gamma$ is a non-degenerate integral curve on the generalized flag variety $G_2/B$, where $B$ is a Borel subgroup of $G_2$.

Using the same argument as in Lemma~\ref{lem:dual}, we easily prove that the curve $\gamma$ is compatible with a 3-form $\Omega$, if and only if the canonical frame of $\gamma$ lies in the stabilizer of $\Omega$. It is easy to see that the irreducible subalgebra $\sll(2,\R)\subset \sll(7,\R)$ is contained in a unique split form of the Lie algebra $G_2$. Moreover, this real form of $G_2$ viewed as an $\sll(2,\R)$-module is decomposed into the 3-dimensional and 11-dimensional irreducible submodules. The first one is $\sll(2,\R)$ itself, and the 11-dimensional submodule has $y^5$ as its highest weight vector. (Here, as above, we denote by $\{x,h,y\}$ the standard basis of $\sll(2,\R)$ viewed as an irreducible subalgebra of $\sll(7,\R)$.) Thus, similar to Lemma~\ref{lem:dual} we get:

\begin{lem}
A non-degenerate curve $\gamma\subset P^6$ is compatible with a $G_2$-structure if and only if all its Wilczynski invariants except for $\theta_6$ vanish identically.
\end{lem}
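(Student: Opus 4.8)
The plan is to mimic the proof of Lemma~\ref{lem:dual}, replacing the bilinear form $b$ with the $G_2$-structure $\Omega$. The essential observation is that compatibility of $\gamma$ with a tensor is, via the canonical moving frame, equivalent to the frame taking values in the stabilizer of that tensor, and the latter condition is read off from which highest-weight vectors $y^i$ survive inside the stabilizer as an $\sll(2,\R)$-submodule.

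First I would set up the identification: by the preceding discussion, $\gamma$ being compatible with the $G_2$-structure $\Omega$ means exactly that $\gamma$ is a non-degenerate integral curve on $G_2/B$ in the sense of Subsection~\ref{g2-ex}. As established there, the irreducibly embedded $\sll(2,\R)\subset\sll(7,\R)$ generated by the standard basis $\{x,h,y\}$ sits inside a unique split $G_2\subset\sll(7,\R)$. I would then invoke the argument template of Lemma~\ref{lem:dual}: the canonical frame bundle $E$ of $\gamma$ (viewed as a curve in the full flag variety of $SL(7,\R)$) takes values in the stabilizer of $\Omega$ if and only if the $W$-valued form $\om_W$, which by~\eqref{eq:omw} is built from the $y^i$ with $i=2,\dots,6$, actually lands in $G_2$. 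The key point justifying this reduction is that the Killing form of $G_2$ agrees up to scale with the restriction of the Killing form of $\sll(7,\R)$, so the subspace $\widetilde W$ defined by~\eqref{eq:w} for $G_2$ is contained in the $W$ defined by the same formula for $\sll(7,\R)$, and the canonical frames therefore coincide.

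The decisive computation is the decomposition of $G_2$, as an $11+3$ dimensional $\sll(2,\R)$-module under the irreducible embedding, which the statement already supplies: the $3$-dimensional summand is $\sll(2,\R)$ itself, and the $11$-dimensional summand has highest weight vector $y^5$. Since each irreducible $\sll(2,\R)$-summand contributes exactly one highest-weight vector of the form $y^i$, the only $y^i$ with $i\ge 2$ lying in $G_2$ is $y^5$. Comparing with~\eqref{eq:omw}, where $\om_W=\sum_{i=2}^6\theta_{i+1}y^i$, the frame lands in $G_2$ precisely when all components $\theta_{i+1}$ with $y^i\notin G_2$ vanish, i.e. when $\theta_{i+1}\equiv 0$ for all $i\in\{2,3,4,6\}$, leaving only $\theta_6$ (the coefficient of $y^5$) potentially nonzero.

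\emph{The main obstacle} is verifying the $\sll(2,\R)$-module structure of $G_2$ precisely enough to pin down that the surviving highest weight occurs at $i=5$ and nowhere else; this is where the specific arithmetic of the $G_2$ root system (as opposed to a general self-duality argument) enters. Once the $3\oplus 11$ decomposition with highest weights at $i=1$ and $i=5$ is granted, the translation back to Wilczynski invariants is immediate, and I would conclude that $\gamma$ is compatible with a $G_2$-structure if and only if $\theta_4=\theta_5=\theta_7=0$ in general notation — equivalently, all Wilczynski invariants except $\theta_6$ vanish identically.
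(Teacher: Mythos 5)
Your proposal follows the paper's own argument essentially verbatim: reduce compatibility with $\Omega$ to the canonical frame taking values in its stabilizer (justified, as in Lemma~\ref{lem:dual}, by the proportionality of the Killing forms so that the two canonical frames coincide), then use the decomposition of the split real form of $G_2$ into $3\oplus 11$ as an $\sll(2,\R)$-module with highest weight vectors $y$ and $y^5$, so that only the coefficient $\theta_6$ of $y^5$ in $\om_W$ may survive. The only blemish is your final list $\theta_4=\theta_5=\theta_7=0$, which accidentally drops $\theta_3$; your own preceding sentence (and the paper's Theorem~\ref{thm:2}(3)) correctly gives $\theta_3=\theta_4=\theta_5=\theta_7=0$.
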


\subsection{Geometric linearization of ordinary differential equations}
Classical linearization procedure associates a linear differential equation with each solution of
a non-linear equation $\mathcal E$. This equation, called the linearization along a given solution, describes all possible first order deformations of the solution that still satisfy the original differential equation up to the first order of deformation parameter. In more detail, let
\begin{equation}\label{ode}
y^{(k+1)}=f(x,y,y',\dots,y^{(k)})
\end{equation}
be an arbitrary ordinary differential equation~$\E$ of order $(k+1)$ solved with respect to the highest derivative. Suppose $\bar y(x)$ is an arbitrary solution of~\eqref{ode}. Consider its deformation $y(x) = \bar y(x) + \eps z(x)$, substitute it to~\eqref{ode} and expand it with respect to $\eps$. As $\bar y$ is a solution, the zero-order term will vanish identically. The first order term will define the linear equation on $z(x)$:
\begin{equation}\label{linode}
z^{(k+1)} = \frac{\partial f}{\partial y} z + \frac{\partial f}{\partial y'} z' + \dots + \frac{\partial f}{\partial y^{(k)}} z^{(z)},
\end{equation}
where all partial derivatives  $\partial f/\partial y^{(i)}$, $i=0,\dots,k$, are evaluated at the solution $\bar y(x)$. Equation~\eqref{linode} is called \emph{the linearization} of the equation~\eqref{ode} along the solution~$\bar y$. The solution space of the linearization~\eqref{linode} can be considered as the tangent space to the solution space $\operatorname{sol}(\E)$ of the original equation~$\E$ given by~\eqref{ode}.

As it was originally discovered by Wilczynski~\cite{wilch}, the geometry of linear differential equations is equivalent to the projective geometry of curves. In the modern language this equivalence can be formalized as follows. Let $M\to \Gr_k(V)$ be an embedding of a smooth manifold $M$ into the Grassmann variety $\Gr_k(V)$. Let $\pi\colon \mathbb E\to \Gr_k(V)$ be the hyperplane vector bundle over $\Gr_k(V)$, which is dual to the canonical one, i.e. $\mathbb E_W=W^*$ for each $W\in \Gr_k(V)$. Each element $\alpha\in V^*$ naturally defines the global section $s_\alpha$ of $\mathbb E$. Let $E$ be the restriction of $\mathbb E$ on $M$, and let $S\subset\Gamma(E)$ be the restriction of all sections $s_\alpha$ to $M$. Then $S$ is a well-defined finite-dimensional subspace in the space of sections of~$E$. Prolonging elements of $S$ to the jet spaces $J^i(E)$, we can construct a differential equation $\mathcal E\subset J^r(E)$ whose solution space coincides with~$S$.

In other direction, let $\pi\colon E\to M$ be an arbitrary $k$-dimensional vector bundle over the manifold $M$ and let $\mathcal E\subset J^r(E)$ be a finite-type linear differential equation on sections of $E$. Let $S=\operatorname{sol}(\mathcal E)$ be the solution space of $\mathcal E$. Let $V=S^*$ and define the embedding $M\to \Gr_k(V)$ as follows:
\[ x\mapsto \{s\in S \mid s(x) = 0\}^{\perp} \subset V.\]
This correspondence between finite-type linear differential equations and embedded submanifolds in Grassmann varieties is extensively used in work of Se-ashi~\cite{se-ashi} for describing the invariants of linear systems of ordinary differential equations.

\subsection{Geometric structures on the solution space of an ODE}


Let $M=\sol(\E)$ be the solution space of the equation~\eqref{ode}. For any solution $\bar y\in M$ we get the curve $\gamma\colon \R \to PT_{\bar y}(M)$ defined by the linearization~\eqref{linode} of the equation~\eqref{ode} at the solution $\bar y$. Wilczynski invariants $\theta_i$, $i=3,\dots,k+1$ for each such linearization define the so-called \emph{generalized Wilczynski invariants} $\Theta_i$, $=3,\dots,k+1$ of the initial non-linear ODE. See~\cite{dou:08} for more details.

Recall that a \emph{pseudo-conformal structure} on a smooth manifold $M$ is defined as a family of pseudo-Riemannian metrics on $M$ proportional to each other by non-zero functional factor. Similarly, we define the \emph{pseudo-symplectic structure} on $M$ as a family of non-degenerate 2-forms on $M$ proportional to each other by non-zero functional factor. Finally, \emph{a $G_2$-structure} on $M$ is defined as a differential 3-form $\Omega$ such that $\Omega_p$ defines a $G_2$-structure on each tangent space $T_pM$.

We say that a pseudo-symplectic, pseudo-conformal or $G_2$-structure on the solution space $M$ of the given ODE is \emph{natural}, if it is compatible with linearizations $\gamma \to PT_\gamma(M)$ for each solution $\gamma$. Using the above results on the compatibility of projective curves with symmetric and skew-symmetric bilinear forms and $G_2$-structures, we immediately get the following result.

\begin{thm}\label{thm:2}
Let $y^{(k+1)}=f(x,y,y',\dots,y^{(k)})$ be an ordinary differential equation of order $k+1\ge 3$. Its solution space is admits a natural:
\begin{enumerate}
\item pseudo-conformal structure if and only if $k$ is odd and all generalized Wilczynski invariants $\Theta_{2i+1}$ vanish identically;
\item pseudo-symplectic structure if and only if $k$ is even and all generalized Wilczynski invariants $\Theta_{2i+1}$ vanish identically;
\item $G_2$-structure if and only if $\Theta_3=\Theta_4=\Theta_5=\Theta_7=0$.
\end{enumerate}
In all these cases such structure is uniquely defined.
\end{thm}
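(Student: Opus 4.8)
The plan is to reduce Theorem~\ref{thm:2} to the two pointwise compatibility results already established --- Lemma~\ref{lem:dual} and the lemma on compatibility with a $G_2$-structure --- and then to globalize them over the solution space $M=\sol(\E)$. First I would record that $M$ has dimension $k+1$ and that, by the discussion preceding the theorem, each solution $\bar y\in M$ produces a non-degenerate curve $\gamma_{\bar y}\colon\R\to PT_{\bar y}M$ coming from the linearization~\eqref{linode}, whose Wilczynski invariants $\theta_i$ are by definition the values at $\bar y$ of the generalized Wilczynski invariants $\Theta_3,\dots,\Theta_{k+1}$ of $\E$. The key reformulation is that a natural pseudo-conformal, pseudo-symplectic, or $G_2$-structure on $M$ is precisely a smooth field $\bar y\mapsto b_{\bar y}$ of symmetric, skew-symmetric, or $G_2$-three-forms on the fibres $T_{\bar y}M$, each individually compatible with $\gamma_{\bar y}$ in the sense of~\eqref{b} (respectively of the $G_2$-compatibility condition). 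Thus the whole statement splits into a fibrewise existence/uniqueness question and a globalization question.

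For the fibrewise step I would apply the two lemmas with $V=T_{\bar y}M$, so that $\dim V=k+1$. Lemma~\ref{lem:dual} says that a compatible symmetric or skew-symmetric form on $T_{\bar y}M$ exists if and only if the odd Wilczynski invariants $\theta_{2i+1}$ of $\gamma_{\bar y}$ vanish, and that the symmetry type is forced by the parity of $\dim V=k+1$; using $\theta_{2i+1}(\bar y)=\Theta_{2i+1}(\bar y)$ and letting $\bar y$ range over $M$ yields the vanishing conditions $\Theta_{2i+1}\equiv0$ of~(1) and~(2), the symmetric and skew-symmetric cases being distinguished by the parity of $k$. Likewise the $G_2$-compatibility lemma produces a compatible $G_2$-$3$-form on $T_{\bar y}M$ precisely when every Wilczynski invariant except $\theta_6$ vanishes, that is when $\Theta_3=\Theta_4=\Theta_5=\Theta_7=0$; here $k=6$ and $\dim V=7$. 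Both directions of each equivalence then follow at once: a natural structure forces the pointwise vanishing at every $\bar y$, hence $\Theta_i\equiv0$, and conversely $\Theta_i\equiv0$ delivers a compatible form in every fibre.

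It remains to promote the fibrewise forms to a single smooth structure on $M$ and to establish uniqueness, and this is where the real care is needed. For uniqueness I would invoke the representation theory of the irreducible $\sll(2,\R)=\sg\subset\sll(T_{\bar y}M)$ attached to the flat curve of the given type: the space of $\sg$-invariant bilinear forms on the irreducible $(k+1)$-dimensional module $T_{\bar y}M$ is one-dimensional (symmetric for $k+1$ odd, skew-symmetric for $k+1$ even), and in the $G_2$ case the irreducible $\sll(2,\R)$ lies in a unique split $G_2\subset\gl(7,\R)$, so the invariant $3$-form is likewise unique up to scale. Consequently each $b_{\bar y}$ is determined up to a scalar, which is exactly the conformal, symplectic, or $G_2$ scaling freedom, giving uniqueness of the structure. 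For smoothness I would note that $b_{\bar y}$ is built algebraically from the osculating data of $\gamma_{\bar y}$ --- equivalently, it is the invariant form preserved by the canonical frame of Theorem~\ref{thm1}, which depends smoothly on $\bar y$ --- so $\bar y\mapsto b_{\bar y}$ is a smooth section of the relevant tensor bundle. The main obstacle I anticipate is precisely this globalization: one must check that the fibrewise normalizations can be chosen to vary smoothly and, in the $G_2$ case, that the resulting $3$-form stays inside the open $GL(7,\R)$-orbit of $\Omega$ in each fibre, so that it genuinely defines a $G_2$-structure rather than degenerating. Both properties follow from the smooth dependence of the canonical frame on the solution, but they are the points requiring verification beyond the purely algebraic lemmas.
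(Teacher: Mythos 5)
Your proposal is correct and follows essentially the same route as the paper: the authors state that Theorem~\ref{thm:2} follows ``immediately'' by applying Lemma~\ref{lem:dual} and the $G_2$-compatibility lemma fibrewise to the linearization curves $\gamma_{\bar y}\subset PT_{\bar y}M$, with the parity of $\dim T_{\bar y}M=k+1$ dictating the symmetric versus skew-symmetric case. Your additional remarks on uniqueness (via the one-dimensionality of the space of invariant forms) and on smooth dependence of the canonical frame merely make explicit what the paper leaves implicit.
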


\section{Generalizations and discussion}\label{sec:5}
\subsection{Parametrized curves}
The paper is easily generalized to the case of parametrized curves. In this case one needs to change the definition of the subalgebra $\sg$ as follows:
\begin{align*}
\sg_{-i} & = 0,\quad i \ge 0;\\
\sg_{-1} & = \langle x \rangle;\\
\sg_0  &= \{ u \in\g_0 \mid [x,u] = 0 \};\\
\sg_{i+1} &= \{ u \in \g_{i+1} \mid [x,u]\subset \sg_i \}.
\end{align*}
We also have to consider tangent spaces to a parametrized curve $\gamma$ as elements of $\g/\hg$ instead of one-dimensional subspaces. As a result, we get a result similar to Theorem~\ref{thm1}, which is applicable to all parametrized curves $\gamma$ such that their images (treated as unparametrized curves) can be approximated by $\gamma_0$ up to the first order.

\subsection{Curves in general parabolic geometries}
By a parabolic geometry modeled by $G/P$ we understand the principal $P$-bundle $\pi\colon\cG\to M$ equipped with a 1-form $\omega\colon T\cG\to \g$ such that:
\begin{enumerate}
\item $\omega_p$ is an isomorphism of vector spaces for any $p\in \cG$;
\item $\omega (X^*) = X$ for any fundamental vector field $X^*$ on $\cG$ corresponding to an element $X\in \pg$;
\item $R_g^*\omega = \Ad g^{-1}\omega$ for any right shift $R_g$ defined by the action of an element $g\in P$.
\end{enumerate}
The classical examples of parabolic geometries include the projective and conformal structures on smooth manifolds. The tangent space $TM$ is naturally identified with $\cG\times_P (\g/\pg)$ and, as in the case of a homogeneous space $G/P$, carries a natural distribution $D\subset TM$ corresponding to the subspace $(\g_{-1}+\pg)/\pg$. In particular, we can also introduce the notion of an integral curve $\gamma$ in $M$. Using again the identification $TM\equiv \cG\times_P(\g/\pg)$ we can define the notion of a curve of constant type $x$ for any $x\in \g_{-1}$.

The notion of a moving frame along $\gamma$ can be naturally generalized to any parabolic geometry as an arbitrary subbundle of a $P$-bundle $\pi^{-1}(\gamma)\to\gamma$.

Theorem~\ref{thm1} is also immediately generalized as follows:
\begin{thm}\label{thm1b}
Let $(M,\cG,\om)$ be a parabolic Cartan geometry modeled by a generalized flag variety $G/P$ and let $x$ be an arbitrary non-zero element in $\g_{-1}$. Fix any graded subspace $W\subset\pg$ complementary to $\sg^{(0)} + [x,\pg]$.

Then for any integral curve $\gamma$ in $M$ of type $x$ there is a canonical frame bundle $E\subset \cG$ along $\gamma$ such that for each point $p\in E$ the subspace $\om(T_pE)$ is $W$-normal.

If, moreover, $W$ is $S^{(0)}$-invariant, then $E$ is a principal $S^{(0)}$-bundle and $\om|_{E}$ decomposes as a sum of the $W$-valued 1-form $\om_W$ and the $\sg$-valued 1-form $\om_{\sg}$, where $\om_\sg$ is a flat Cartan connection on $\gamma$ defining a canonical projective parametrization of $\gamma$, and $\om_W$ is a vertical $S^{(0)}$-equivariant 1-form.
\end{thm}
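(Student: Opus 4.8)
The plan is to observe that the reduction procedure of Section~\ref{sec:3} nowhere used that $\om$ was literally the Maurer--Cartan form of $G$; it used only the three properties now promoted to the defining axioms of a Cartan connection, namely that $\om_p$ is a pointwise isomorphism, that it reproduces the fundamental vector fields of $\pg$, and that it satisfies the equivariance $R_g^*\om=\Ad g^{-1}\om$ for $g\in P$. Since $\pi\colon\cG\to M$ is a principal $P$-bundle, the right action of $P$ on $\cG$ --- and in particular each right translation by $\exp(u)$, $u\in\pg$ --- is available, so every algebraic step of the construction transfers verbatim. First I would set up the zero-order bundle exactly as in Subsection~\ref{ss:zero}: writing $\bar\om_p\colon T_{\pi(p)}M\to\g/\pg$ for the isomorphism induced by $\om_p$, define
\[
E_0=\{p\in\pi^{-1}(\gamma)\mid \bar\om_p(T_{\pi(p)}\gamma)=\langle x\rangle\bmod\pg\}.
\]
Because $\gamma$ is an integral curve of type $x$, this is a nonempty principal $H^{(0)}$-bundle over $\gamma$, and its structure function is again encoded by a single map $\chi_0\colon E_0\to\g_0/\sg_0$.

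Next I would run the inductive reductions~\eqref{reduct} with no modification. The two transformation rules
\begin{align*}
\chi(pg)&=\chi(p)\bmod\sg_{k-1},\quad g\in P^{(k+1)},\\
\chi(p\exp(u))&=\chi(p)-[u,x]\bmod\sg_{k-1},\quad u\in\g_k,
\end{align*}
are deduced solely from the equivariance $c_{k-1}(p\exp(u))=\Ad(\exp(-u))\,c_{k-1}(p)$, which is exactly axiom~(3) applied to $g=\exp(u)\in P$. Hence, as before, the complementarity of $W_{k-1}$ to $\sg_{k-1}+[x,\g_k]$ shows that each reduction~\eqref{reduct} is well defined and stable under $\exp(\sg_k)P^{(k+1)}$, and under $H^{(k)}$ when $W$ is $S^{(0)}$-invariant. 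After finitely many steps $\pg^{(N)}=0$ and one obtains the canonical $W$-normal frame bundle $E\subset\cG$.

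In the $S^{(0)}$-invariant case, the decomposition $\om|_E=\om_W+\om_\sg$ and the proof that $\om_\sg$ is a Cartan connection modeled on $S/S^{(0)}$ go through word for word, using only axioms~(1)--(3). The one point that genuinely needs a new argument is the \emph{flatness} of $\om_\sg$: in the homogeneous setting it was inherited from the flatness of the Maurer--Cartan form, whereas here the ambient $\om$ may be curved. This is the step I expect to be the main obstacle, and I would settle it for dimensional reasons rather than by computing curvature. By general Cartan-connection theory the curvature of $\om_\sg$ is a horizontal, $S^{(0)}$-equivariant, $\sg$-valued $2$-form on $E$, hence descends to a $2$-form on the base $\gamma$ with values in the associated bundle $E\times_{S^{(0)}}\sg$; since $\dim\gamma=1$ we have $\wedge^2 T^*\gamma=0$, so this curvature vanishes identically. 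Thus $\om_\sg$ is automatically flat and defines the canonical projective parametrization of $\gamma$ exactly as in the flat model, while $\om_W$ is the vertical $S^{(0)}$-equivariant fundamental invariant. The upshot is that the whole theory is insensitive to the curvature of the background geometry: every integral curve of type $x$ in a possibly curved parabolic geometry carries the same canonical frame and invariants as in $G/P$.
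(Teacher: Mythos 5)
Your proposal is correct and follows essentially the same route as the paper, which states that Theorem~\ref{thm1b} is an ``immediate'' generalization precisely because the reductions of Section~\ref{sec:3} use only the three Cartan-connection axioms. Note that the flatness step you single out as needing a new argument is in fact handled identically in the paper's homogeneous case: the lemma there already concludes flatness from the one-dimensionality of $\gamma$ (not from flatness of the ambient Maurer--Cartan form), so your dimensional argument coincides with the paper's.
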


\subsection{Submanifolds of higher dimension in generalized flag varieties}
We briefly outline how the results of this paper are generalized to the submanifolds of arbitrary dimension in generalized flag varieties (but not in curved parabolic geometries).

For this purpose the element $x\in\g_{-1}$ needs to be replaced by an arbitrary commutative subalgebra $\xg\subset \g_{-1}$. We say that a submanifold $X\subset G/P$ has \emph{constant type $\xg$}, if for any point $p\in X$ there exists an element $g\in G$, such that $g.o=p$ and $g_*(\xg)=T_pX$. In other words, $X$ is of type $\xg$, if all tangent spaces $T_pX$, $p\in X$ belong to the orbit of the subspace $\xg\in T_oM$ under the natural action of $G$ on $\Gr_r(TM)$, where $r=\dim \xg$.

As in case of unparametrized curves, we can define a graded subalgebra $\sg\subset\g$ by
\begin{align*}
\sg_{-1} &= \xg;\\
\sg_{i} &= \{ u \in \g_{i} \mid [u,\xg]\subset \sg_{i-1} \}\quad\text{for all }i\ge 0.
\end{align*}
Then according to~\cite{dk} $\sg$ is exactly the symmetry algebra of the orbit of the commutative subgroup $\exp(\xg)\subset G$ through the origin $o=eP$. We call this orbit the \emph{flat} submanifold of type $\xg$.

Denote by $C^k(\sg_{-1},\g/\sg)$ the standard cochain complex corresponding to the $\sg$-module $\g/\sg$.
The grading of $\g$ naturally extends to the gradings of all spaces $C^k(\sg_{-1},\g/\sg)$, as well as their cochain and coboundary subspaces $Z^k(\sg_{-1},\g/\sg)$ and $B^k(\sg_{-1},\g/\sg)$. Denote by $Z_{+}^k(\sg_{-1},\g/\sg)$ and $B_{+}^k(\sg_{-1},\g/\sg)$ their subspaces spanned by elements of positive degree.

Let $W$ be an arbitrary graded subspace in $Z^1_+(\sg_{-1},\g/\sg)$ complementary to $B^1_+(\sg_{-1},\g/\sg)$. Generalizing the above notion of $W$-normality, we say that a subspace $U\subset \g$ is $W$-normal (with respect to $\sg$), if
\begin{enumerate}
\item $\gr U = \sg$;
\item there is a map $\chi\colon \sg_{-1}\to \g$ such that the corresponding quotient map $\bar \chi\colon \sg_{-1}\to \g/\sg$ lies in $W$ and $x+\chi(x)\in U$ for all $x\in\sg_{-1}$.
\end{enumerate}
As in the case $\dim \sg_{-1}=1$, it is easy to check that the element $\bar\chi\in W$ is uniquely defined for any $W$-normal subspace $U\in \g$.

Then we can prove that for any submanifold $X$ of type $\xg=\sg_{-1}$ there is a unique moving frame $E\subset G$ along $X$ such that $\omega(E_p)$ is a $W$-normal subspace for any point $p\in X$.
In particular, we see that if $H^1_+(\sg_{-1},\g/\sg)=0$, then any submanifold of type $\sg_{-1}$ is locally
equivalent to the flat one.

\end{document}